\newtheorem{theorem}{Theorem}[section]
\newtheorem{corollary}[theorem]{Corollary} 
\newtheorem{claim}[theorem]{Claim}
\newtheorem{proposition}[theorem]{Proposition}
\newtheorem{algorithm}[theorem]{Algorithm}
\newtheorem{definition}[theorem]{Definition}
\newtheorem{remark}[theorem]{Remark}
\newcommand{\abs}[1]{\left\vert #1 \right\vert}
\begin{document}

\author{Vivian Olsiewski Healey}
\email {vhealey@nd.edu}
\title[Recompositions of the Penrose Aperiodic Protoset]{A Family of Recompositions of the Penrose Aperiodic Protoset and Its Dynamic Properties}


\maketitle


\begin{abstract} This paper describes a recomposition of the rhombic Penrose aperiodic protoset due to Robert Ammann. We show that the three prototiles that result from the recomposition form an aperiodic protoset in their own right without adjacency rules. An interation process is defined on the space of Ammann tilings that produces a new Ammann tiling from an existing one, and it is shown that this process runs in parallel to Penrose deflation. Furthermore, by characterizing Ammann tilings based on their corresponding Penrose tilings and the location of the added vertex that defines the recomposition process, we show that this process proceeds to a limit for the local geometry.
\end{abstract}

\section{Introduction}




 
 The Penrose aperiodic tiles have been well-studied by deBruijn \cite{deB}, Lunnon and Pleasants \cite{LP}, and others. See Senechal \cite{Senechal} for a survey. This paper describes a recomposition of the rhombic Penrose aperiodic protoset defined by Robert Ammann in Gr\"unbaum and Shepard \cite{GS}, showing that the three tiles that result from the construction form an aperiodic protoset in their own right without adjacency rules. An interation process is defined on the space of Ammann tilings that runs in parallel to Penrose deflation, and it is shown that this process proceeds to a limit for the local geometry.



While there are a variety of tilings attributed to Roger Penrose, the kind relevant to this paper are those admitted by a protoset of two rhombic tiles, a thin and a thick, with dimensions dependent on the golden ratio $\phi=\frac{1+\sqrt{5}}{2}$. These two tiles can be used to tile the plane non-periodically (without translational symmetry) when assembled following specific adjacency rules (see Figure~\ref{3tilings}(a)). 


An Ammann tiling (see Figure~\ref{3tilings}(b)) may be constructed from a Penrose tiling by rhombs by a process referred to as \textit{recomposition} \cite{GS}. In this process, a single vetex $Q$ is added within a thin Penrose rhomb, and edges are drawn between it and the three nearest Penrose vertices. The geometry of the newly constructed edges are then used to create two specific new vertices and five new edges inside the thick Penrose rhomb. These new vertices and edges are then copied into every Penrose rhomb in the tiling, and the original Penrose edges are deleted.


Although this construction was mentioned in passing in \cite{GS}, it has never been thoroughly studied in the literature to the knowledge of this author. This paper shows that this recomposition process produces a tiling by three prototiles that form an aperiodic protoset in their own right, that is, every tiling admitted by this protoset is non-periodic. Furthermore, after defining Ammann tilings independently of the recomposition process, I show that each Ammann tiling has a unique corresponding Penrose tiling.

For Penrose tilings, there is a process called \textit {double composition} by which a new Penrose tiling may be constructed from a starting Penrose tiling. By deleting specific edges from the original Penrose tiling, this process produces a new Penrose tiling composed of tiles geometrically similar to the originals but scaled by the golden ratio. 


With the correspondence between Penrose tilings and Ammann tilings established, we next describe an iteration process for Ammann tilings that resembles Penrose double composition.  Although this iteration process adds edges as well as deleting some, we prove that it runs in parallel to Penrose deflation.

 
In analyzing the Ammann iteration process further, we address the questions: (1) Is the iterated tiling composed of tiles geometrically similar to those of the original Ammann tiling? (2) If not, is there a sense in which the new tiling is of the same type as the original Ammann tiling? (3) Can this process be carried out using the new tiling as the starting point? and (4) If so, does the sequence of tilings approach a limit?

Although we show that the Ammann iteration process does not yield a single limit tiling, it does yield a limit for the local geometry of the tilings (see Figure \ref{3tilings} (c)). By identifying an Ammann tiling by two parameters (a) its corresponding Penrose tiling and (b) the location of the $Q$ within a thin rhomb, we show that while (a) does not approach a limit, (b) does. This result is summarized in the following theorem.

\begin{theorem}
The map $Q\mapsto Q'$ defined by Ammann iteration  has a unique attractive fixed point along the edge of the thin Penrose rhomb which divides that edge according to the golden ratio.
\label{limit_thm}
\end{theorem}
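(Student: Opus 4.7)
The plan is to set up coordinates on a reference thin Penrose rhomb and to rewrite the Ammann iteration $Q\mapsto Q'$ as an explicit affine self-map of that rhomb, then to show this map is a contraction whose unique fixed point lies on an edge at the golden-section point.

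First, I would fix a reference thin Penrose rhomb $T$ of unit edge length and use affine coordinates on $T$ (for instance, with the two edges emanating from an acute vertex as a basis). The vertex $Q$ lies in $T$; by the result already established that Ammann iteration runs in parallel to Penrose double composition, the iterated vertex $Q'$ lies in a thin Penrose rhomb $T'$ of the deflated (composed) tiling, whose edge length is $\phi$ times the original. Rescaling $T'$ by the factor $1/\phi$ and composing with the combinatorially determined isometry that identifies the rescaled $T'$ with $T$, I obtain a map $F:T\to T$ whose fixed points correspond exactly to the fixed points of $Q\mapsto Q'$ in normalized coordinates.

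Next I would make $F$ explicit. Using the recomposition rule --- $Q$ is joined to the three nearest Penrose vertices, and this data determines the added vertices and edges inside the adjacent thick rhomb --- I locate, inside the deflated thin rhomb $T'$, the vertex that plays the role of $Q'$. This requires identifying which small Penrose rhombs of the original tiling are consumed in forming $T'$ under double composition, and tracking how the existing Ammann vertices in those small rhombs position $Q'$ relative to the vertices of $T'$. The result should be an affine map of the form $F(Q)=AQ+b$, where $A$ is the composition of the similarity of ratio $1/\phi$ with a rotation/reflection coming from the deflation, and $b$ records the displacement contributed by the recomposition geometry.

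Because $A$ scales lengths by $1/\phi<1$, $F$ is a strict contraction on $T$, so the Banach fixed-point theorem immediately yields a unique attractive fixed point $Q^*$, giving both uniqueness and the dynamical conclusion of the theorem. The last step is to solve $F(Q^*)=Q^*$ explicitly and verify (i) that $Q^*$ lies on an edge of $T$ and (ii) that it divides that edge in the ratio $1:\phi$; since the contraction factor is $1/\phi=\phi-1$ and $\phi$ satisfies $\phi^2=\phi+1$, the coordinates of $Q^*$ collapse to algebraic expressions in $\phi$ that simplify to the golden-section point. The step I expect to be hardest is identifying the combinatorial correspondence between $T$, $T'$, and the recomposition-defined location of $Q'$, since several small rhombs meet to form the deflated thin rhomb $T'$ and one must carefully keep track of orientations, reflections, and exactly which added vertex of the original Ammann tiling is promoted to $Q'$ in the new tiling; once that bookkeeping is done, the contraction estimate and the identification of the fixed point follow mechanically.
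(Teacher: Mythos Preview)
Your plan is correct and matches the paper's approach: set up coordinates on a reference thin rhomb, exhibit $Q\mapsto Q'$ as an explicit affine map with linear part of norm $1/\phi$, and read off the unique attractive fixed point at $(x,y)=(1/\phi,0)$. The paper's one simplification over what you anticipate as the hardest step is the observation that in corona~3 the physical point $Q$ coincides with $Q'$ and only the reference triangle changes orientation and scale, so the bookkeeping reduces to a single law-of-cosines computation rather than tracking several small rhombs; after converting to Cartesian coordinates the map becomes $x'=1-x/\phi$, $y'=y/\phi$, and the rest is exactly your contraction argument.
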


\begin{figure}
(a)$\qquad\qquad$$\qquad\qquad$$\qquad\qquad$ (b)$\qquad\qquad$$\qquad\qquad$$\qquad\qquad$ (c)

\includegraphics[scale=.35]{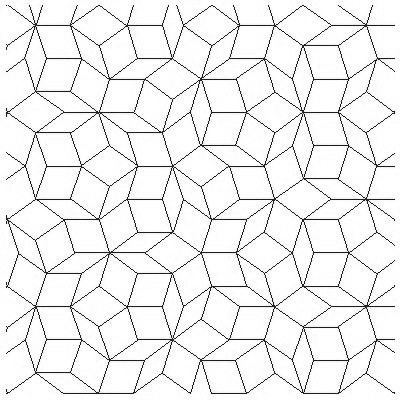}
\includegraphics[scale=.35]{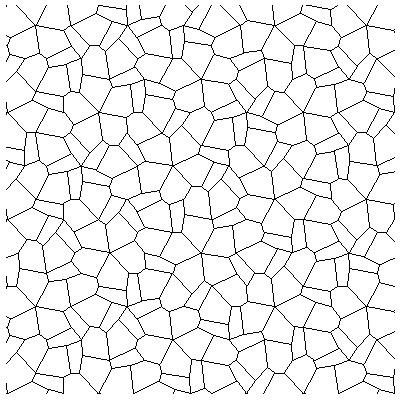}
\includegraphics[scale=.35]{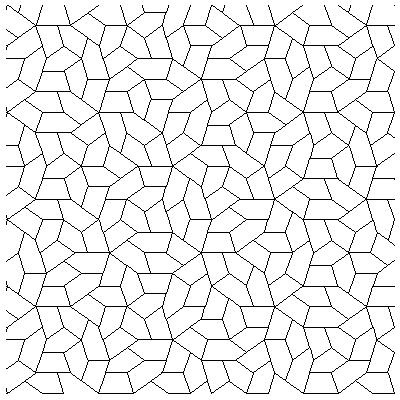}
\caption{From left to right: a patch of a Penrose tiling, the corresponding patch of a generic Ammann tiling, a patch of the Ammann limit tiling.}\label{3tilings}
\end{figure}


In section 2, we recall the necessary terminology related to tilings. In section 3, we describe Penrose tilings, including the geometry of the Penrose rhombs, the local isomorphism theorem, the definition of Penrose deflation, and the construction of identifying sequences. Section 4 describes Ammann tilings, detailing the recomposition process that constructs an Ammann tiling from a Penrose tiling, and proving that the three tiles that result from the recomposition process form an aperiodic protoset. Section 5 describes the iteration process for Ammann tilings and shows the correspondence between Ammann iteration and Penrose deflation. In section 6, we examine the dynamics of the Ammann iteration process and prove Theorem~\ref{limit_thm}. Finally, in section 7 we discuss possible application to quasicrystals.

The research for this paper was begun at the National Science Foundation sponsored Research Experience for Undergraduates at Canisius College, Summer 2008, under the guidance of Professors Terry Bisson and B.J. Kahng. The project was continued through Fall 2009 at the University of Notre Dame with Professor Arlo Caine. Research during Summer 2009 was partially funded with the help of Professor Frank Connolly through NSF Grant DMS-0601234. Many thanks to Professor Caine for the long hours he spent with me on this project and in particular for a suggestion that simplified the proof of Theorem \ref{limit}. Without his help this project would not have been possible. Also, thanks to Professor Jeffrey Diller for suggestions pertaining to the dynamics of the Ammann iteration process.

\section{Terminology}

A \textit{plane tiling} is a countable family $\mathcal{T}=\{T_{1},T_{2},...\}$ of closed subsets of the Euclidean plane, each homeomorphic to a closed circular disk, such that the union of the sets $T_{1}, T_{2},...$ (which are known as the \textit{tiles} of $\mathcal{T}$) is the whole plane, and the interiors of the sets $T_{i}$ are pairwise disjoint \cite{GS}.
We say that a set $\mathcal {S}$ of representatives of the congruence classes in $\mathcal{T}$ is a \textit{protoset} for $\mathcal{T}$, and each representative is a \textit{prototile}. If $\mathcal{S}$ is a protoset for $\mathcal{T}$, then we say that $\mathcal{S}$ \textit{admits }$\mathcal{T}$.

A \textit{patch} is a finite set of tiles whose union is simply connected.
A patch is called \textit{locally legal} if the tiles are assembled according to the relevant adjacency rules and is called \textit{globally legal} if it can be extended to an infinite tiling. 

The \textit{(first) corona} of a tile $T_{i}$ is the set 
$$\mathcal{C}(T_{i})=\{T_{j}\in \mathcal{T}: \exists\, x,y\in T_{i}\cap T_{j} \text{ such that } x\neq y\}.$$
The \textit{(first) corona atlas }is the set of all (first) coronas that occur in $\mathcal{T}$, and a \textit{reduced (first) corona atlas} of $\mathcal{T}$ is a subset of the corona atlas of $\mathcal{T}$ that covers $\mathcal{T}$.
Similarly, the \textit{(first) vertex star} of a vertex $v$ is the set $\mathcal{V}(v)=\{T_{j}\in\mathcal{T}\mid T_{j}\cap \{v\} \neq\emptyset\}$, and a \textit{(first) vertex star atlas} is the set of all (first) vertex stars that occur in $\mathcal{T}$.
Finally, a tiling $\mathcal{T}$ is \textit{non-periodic }if it does not have translational symmetry in more than one direction, and a set of prototiles $\mathcal{S}$ is \textit{aperiodic} if it admits only non-periodic tilings.

\section{Penrose Tilings}

\begin{figure}\center{\includegraphics[scale=.3]{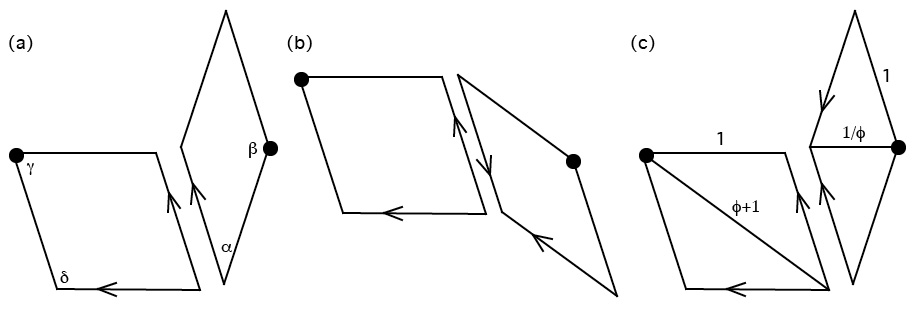}\caption{(a) the Penrose rhombs properly assembled, (b) the Penrose rhombs improperly assembled, (c) the Penrose rhombs split into triangles. }\label{rhombs}}\end{figure}

There are several types of tilings known as Penrose tilings. The type relevant for this paper is built from the two rhombic prototiles shown in Figure~\ref{rhombs}(a).
The sides of the rhombs are all of length one and the angles measure $\alpha=\frac{\pi}{5}, \beta=\frac{4\pi}{5},\gamma=\frac{2\pi}{5},$ and $\delta=\frac{3\pi}{5}$. In order to guarantee a non-periodic tiling, the edge and angle markings must line up with each other as in Figure~\ref{rhombs}(a).
Illegal configurations, such as the one in Figure~\ref{rhombs}(b),
either produce a periodic tiling of the plane or prevent a tiling of the plane.


\begin{theorem} 
The Penrose protoset, together with the adjacency rules, admits uncountably many non-congruent tilings of the plane, all of which are non-periodic (\cite{Senechal}, p189).
\end{theorem}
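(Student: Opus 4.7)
The plan is to establish three separate properties: existence of at least one legal tiling, non-periodicity of every legal tiling, and the existence of uncountably many pairwise non-congruent legal tilings. The central tool is the deflation/composition pair of operations (formalized in Section 3): deflation subdivides each Penrose rhomb into smaller legal rhombs scaled by $\phi^{-1}$, and composition is its inverse, grouping adjacent rhombs into larger legal rhombs scaled by $\phi$.

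For existence, I would start from a single thick rhomb $P_0$ and apply deflation-followed-by-rescaling repeatedly, obtaining a nested sequence of locally legal patches $P_0\subset P_1\subset\cdots$ whose inradii grow like $\phi^n$. A compactness/K\"onig's-lemma argument on the tree of legal finite extensions of each $P_n$ then extracts an infinite tiling of the whole plane that respects all adjacency rules.

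For non-periodicity, suppose for contradiction that $\mathcal{T}$ is a legal tiling with some nonzero translational period $v$. Composition is a local operation and therefore preserves translational symmetries, so each iterated composition $C^n(\mathcal{T})$ is still legal and still has $v$ as a period. But the edge length of $C^n(\mathcal{T})$ is $\phi^n$, and once $\phi^n>|v|$ no rhomb of $C^n(\mathcal{T})$ can be translated by $v$ to a disjoint rhomb, contradicting the pairwise-disjoint-interiors axiom in the definition of a tiling.

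For uncountability, I would use the canonical composition hierarchy: every legal tiling $\mathcal{T}$ admits a unique sequence $\mathcal{T}^{(0)},\mathcal{T}^{(1)},\ldots$ of coarser legal tilings, and tracing a distinguished tile upward through its successive ancestors assigns $\mathcal{T}$ an infinite admissible address sequence. The space of such sequences is a Cantor space of cardinality $2^{\aleph_0}$, and distinct sequences yield tilings that differ on arbitrarily large finite patches around the distinguished tile. The main obstacle I expect lies precisely here: passing from this uncountable parametrization to uncountably many congruence classes is not immediate because the Euclidean motion group itself has continuum cardinality. I would resolve this by extracting an uncountable subfamily of address sequences whose corresponding tilings carry pairwise distinct combinatorial invariants (for example, different relative frequencies of certain finite patch types at some scale), and then invoking the local isomorphism theorem of Section 3 to conclude that no planar isometry can relate two such tilings. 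This is essentially the identifying-sequence method previewed for Section 3 and carried out in detail by Senechal \cite{Senechal}.
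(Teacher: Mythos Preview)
The paper does not supply its own proof of this theorem; it is quoted as background with a citation to Senechal. Your overall three-part strategy---existence by iterated deflation and compactness, non-periodicity by unbounded composition, uncountability via address/index sequences---is indeed the standard one found in that reference, and your first two parts are essentially correct.

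There is, however, a genuine gap in your uncountability step. You propose to separate congruence classes by extracting address sequences whose tilings carry ``different relative frequencies of certain finite patch types'' and then to invoke the Local Isomorphism Theorem to rule out congruences between them. This is backwards on both counts. The Local Isomorphism Theorem (stated here immediately after the result you are proving) asserts precisely that every finite patch of any one Penrose tiling occurs in every other Penrose tiling; it is therefore an obstruction to distinguishing tilings by local data, not a tool for doing so. Moreover, the Penrose tiling space is uniquely ergodic, so all Penrose tilings have \emph{identical} patch frequencies at every scale---the combinatorial invariant you are reaching for does not exist.

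The argument that actually closes the gap is the one the paper sets up right after the statement. Two congruent tilings, with suitably transported basepoints, produce index sequences that eventually coincide; hence congruence classes of tilings inject into $X_{p}/\!\sim$. Each $\sim$-class is countable (only countably many sequences eventually agree with a given one), while $X_{p}$ has cardinality $2^{\aleph_0}$, so $X_{p}/\!\sim$ is uncountable. That is the source of uncountably many non-congruent tilings, not any local or frequency-based invariant.
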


\begin{theorem}[Local Isomorphism] Every patch in a given Penrose tiling by rhombs occurs infinitely many times in every other Penrose tiling by rhombs, \textit{i.e.,} all Penrose tilings by rhombs are locally isomorphic (\cite{Senechal}, p175).
\end{theorem}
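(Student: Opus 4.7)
The plan is to deduce local isomorphism from the composition (inflation) structure that the Penrose protoset with adjacency rules enjoys. Recall that every Penrose tiling $\mathcal{T}$ admits a canonical \emph{composition}: its tiles group uniquely into larger rhombic ``supertiles'' whose edge length is $\phi$ times the original and which themselves obey the Penrose adjacency rules. Iterating $k$ times yields a Penrose tiling $\mathcal{T}^{(k)}$ at scale $\phi^{k}$, and the inverse operation (deflation) recovers $\mathcal{T}$ from $\mathcal{T}^{(k)}$ deterministically. I would take this as given from the cited references, since the paper itself leans on it (the discussion of double composition in the introduction).

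Let $P$ be a patch in $\mathcal{T}$ with diameter $d$. First I would pick $k$ so large that $\phi^{k} \gg d$; then $P$ is contained in a cluster $C_{k}$ of adjacent level-$k$ supertiles whose cardinality $M$ is bounded by a universal constant (a ball of radius $d$ in $\mathcal{T}^{(k)}$ meets only boundedly many supertiles, as the supertile diameters are $\Theta(\phi^{k})$). Because deflation is deterministic, $P$ is recovered exactly from $C_{k}$ together with the location of $P$ inside $C_{k}$. Thus finding a congruent copy of $P$ in an arbitrary Penrose tiling $\mathcal{T}'$ reduces to finding a congruent copy of $C_{k}$ in $\mathcal{T}'^{(k)}$.

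To produce such a copy, I would compose $\mathcal{T}'$ a further $\ell$ times. Since $C_{k}$ is itself a legal Penrose patch at scale $\phi^{k}$, for $\ell$ sufficiently large $C_{k}$ must appear as a sub-cluster inside the deflation of any single level-$(k+\ell)$ supertile of the two types: this is forced by the fact that the deflation of each rhombic supertile is prescribed and that the adjacency rules leave no freedom. Because $\mathcal{T}'^{(k+\ell)}$ covers the entire plane by finitely many congruence classes of supertile and finitely many congruence classes of vertex star, a pigeonhole argument applied to an exhausting sequence of disjoint balls in $\mathcal{T}'^{(k+\ell)}$ produces infinitely many congruent copies of $C_{k}$ in $\mathcal{T}'^{(k)}$. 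Deflating $k$ times then produces infinitely many congruent copies of $P$ in $\mathcal{T}'$.

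The main obstacle is the passage from ``$C_{k}$ appears somewhere in $\mathcal{T}'^{(k)}$'' to ``$C_{k}$ appears infinitely often'' --- this is the \emph{repetitivity} of Penrose tilings, and it is the real content of the theorem. It rests on the primitivity of the inflation rule together with the embedding of every legal patch into a supertile of sufficiently high level; making this embedding uniform over all patches of a given diameter is where I would expect to spend most of the care. Since the machinery is worked out in detail in Senechal, I would ultimately quote it rather than rederive it, exactly as the author has chosen to do.
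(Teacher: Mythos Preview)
The paper does not prove this theorem; it simply quotes it from Senechal with a page reference, and you correctly anticipate this in your final sentence.  Your sketch is the standard substitution-based argument (finite local complexity plus primitivity of the inflation rule), which is indeed what the cited reference does, so in that sense your route and the paper's implicit route coincide.

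One small wrinkle in your write-up: you identify the ``main obstacle'' as upgrading \emph{appears somewhere} to \emph{appears infinitely often}, but in fact once your step~3 is established---that $C_k$ sits inside the $\ell$-fold deflation of \emph{every} level-$(k+\ell)$ supertile---infinitely many copies are automatic, since $\mathcal{T}'^{(k+\ell)}$ contains infinitely many supertiles.  The genuine work is step~3 itself: showing that every globally legal patch of bounded size occurs inside a sufficiently high-level supertile.  Your one-line justification (``deflation is prescribed and the adjacency rules leave no freedom'') is not enough on its own; what is needed is primitivity of the Penrose substitution matrix together with finite local complexity, exactly as you note two sentences later.  So the argument is sound, but the emphasis is slightly misplaced.
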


The rhombic prototiles may equivalently be thought of as pairs of triangles, as shown in Figure~\ref{rhombs}(c), the smaller with side lengths 1 and $\frac{1}{\phi}$, where $\phi$ is the golden ratio $\phi=\frac{1+\sqrt{5}}{2}=1.618\dots$, and the larger with side lengths 1 and $\phi+1$ (see Figure~\ref{rhombs}).

\begin{figure}\center{\includegraphics[scale=.3]{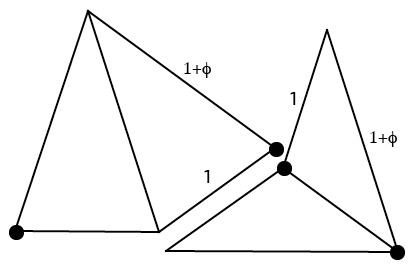}}\caption{The Penrose ``kite and dart" tiles.}\label{k&d}\end{figure}

\begin{remark}
Another type of Penrose tiling is one whose prototiles are commonly referred to as a ``kite" and a ``dart."  In this version, the acute triangle has side lengths measuring 1 and $\phi+1$ as in Figure~\ref{k&d}.
As in the case of rhombs, in order to produce a nonperiodic tiling the triangles of the kites and darts are assembled so that the marked vertices shown in Figure~\ref{k&d} coincide. We will be chiefly concerned with Penrose tilings by kites and darts only with regard to Penrose deflation.
\end{remark}

\begin{figure}\center{\includegraphics[scale=.25]{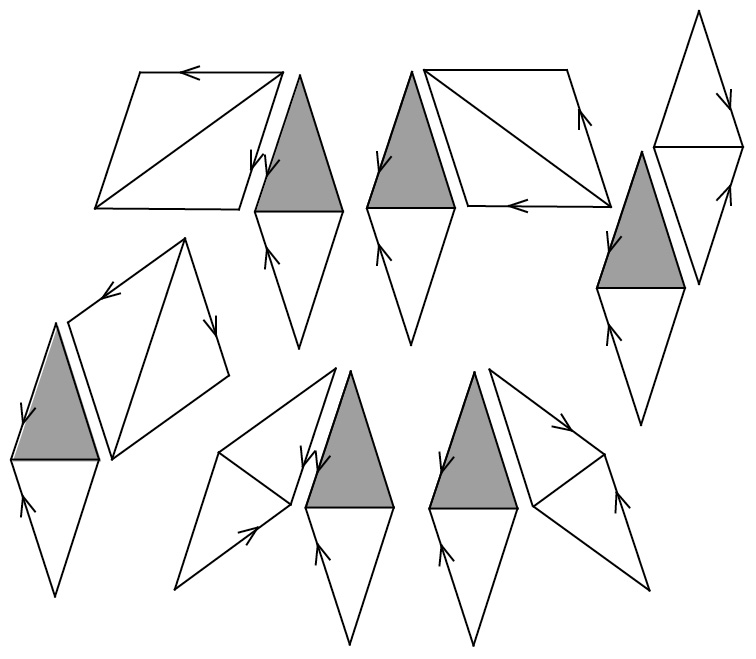}}\caption{The six locally legal placements of tiles adjacent to a small tile.}\label{def}\end{figure}
\begin{figure}\center{\includegraphics[scale=.5]{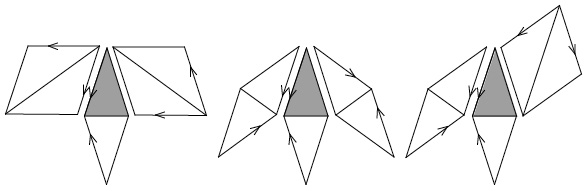}}\caption{Problematic configurations resulting from the locally legal placements of Figure \ref{def}.}\label{defbad}\end{figure}

\begin{proposition} Given a Penrose tiling by either rhombs or kites and darts, for each small triangle (half tile with divisions shown in Figures \ref{rhombs} and \ref{k&d}) there is exactly one large triangle adjacent to it such that the edge between them may be erased producing an even larger triangular tile similar to the original small triangle. 
\end{proposition}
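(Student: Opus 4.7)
The plan is to fix an arbitrary small triangle $T$ in the tiling and to carry out a case analysis on the tiles that can be adjacent to $T$ across its three edges. The Penrose matching rules restrict the possibilities for a neighboring tile across each edge to a finite list; in total one obtains the six locally legal placements displayed in Figure \ref{def}.

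The first step is a purely geometric observation. A neighboring large triangle $L$ can share only an edge of length $1$, since that is the only length common to both triangle types. For the union $T \cup L$, with the shared edge erased, to be itself a triangle, the two angles of $T$ and $L$ meeting at one endpoint of the shared edge must sum to $\pi$, so that that endpoint becomes an interior point of a straight side of the union; the angles at the other endpoint then form a new vertex of the union. For the resulting triangle to be similar to $T$, which in the rhombic case has angles $\pi/5, 2\pi/5, 2\pi/5$, the surviving angles must match that angle list. A direct computation, using that the large triangle has angles $\pi/5, \pi/5, 3\pi/5$, shows that this condition picks out a unique way to glue $L$ along a length-$1$ edge of $T$, producing the isosceles triangle with side lengths $1, \phi, \phi$, which is $T$ scaled by $\phi$.

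The second step is to check this geometrically valid configuration against the list of six locally legal placements in Figure \ref{def}, verifying that exactly one of the six realizes it. The remaining five placements, although locally legal in isolation, must then be ruled out as globally extendable: assuming any one of them occurs in a tiling, I propagate the Penrose matching rules outward from the configuration, forcing the placement of neighboring tiles in successive layers until some vertex fails an adjacency condition. These five contradictions are exactly the configurations collected in Figure \ref{defbad}. Combined, this shows that in any Penrose tiling, the geometrically valid configuration is forced around every small triangle, giving both existence and uniqueness. The kite-and-dart case is handled by the analogous enumeration, or equivalently by invoking the standard bijection between the rhombic and kite-and-dart Penrose tilings.

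The main obstacle is the global elimination in the second step. Although each of the five forcing arguments is finite, the propagation of forced tiles has to be carried out with some care: the asymmetric edge and angle markings on the Penrose tiles typically allow the contradiction to surface only after one or two additional layers of the corona are filled in, so the bookkeeping is delicate even though the total amount of work is bounded.
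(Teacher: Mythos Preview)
Your first step---the geometric analysis of how a large triangle can be glued to a small one to produce a scaled copy---is fine and in fact makes explicit something the paper leaves to the figures. The difficulty is entirely in your second step, where the logic does not match what the figures actually contain and would not go through as stated.

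You assert that of the six locally legal placements in Figure~\ref{def}, exactly one realizes the correct merge, and that the remaining five must each be shown to be globally illegal. That is not the situation: the paper records that \emph{five} of the six placements are globally legal (only the top-right one fails), so you cannot hope to derive a contradiction from any one of those five in isolation. A small triangle in an actual Penrose tiling typically has several of these legal placements occurring simultaneously on its different edges; the issue is not which single placement survives, but whether, among the neighbours that are actually present, there is exactly one large triangle sitting in the correct merging position.

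Accordingly, the paper's argument is about \emph{combinations} of the five surviving placements around a single small triangle. Certain combinations are ``problematic'' in that they furnish either two merge candidates (violating uniqueness) or none (violating existence); these are the three---not five---configurations collected in Figure~\ref{defbad}, and it is these three composite patches that are then ruled out, by appeal to the known Penrose vertex-star atlas rather than by an \textit{ad hoc} propagation argument. All remaining combinations automatically have exactly one merge candidate, which is the conclusion.

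So the gap is structural: you are attempting to eliminate individual neighbour placements when what must be eliminated are problematic neighbourhood \emph{patterns}. The former is impossible (four of the five you target really do occur in Penrose tilings), and even if it succeeded it would not by itself give existence of the merge partner.
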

\begin{proof}
Figure \ref{def} shows the six locally legal configurations of tiles adjacent to a small triangle in the rhomb case. Of these, all are globally legal except for the one on the top right (see Figure~\ref{pvstars}). The others may be combined into the possible problematic configurations shown in Figure~\ref{defbad}. In the leftmost configuration in Figure~\ref{defbad}, there are two tiles that might be combined with the center tile, and in the other two configurations there are none. However, these configurations are not globally legal (see Figure~\ref{pvstars}, (\cite{Senechal}, p177)). The remaining globally legal configurations satisfy the theorem.
 The kite and dart case is similarly easy to verify. \end{proof}

\begin{definition}\label{pdef}[Penrose Composition] Penrose composition is the process by which each small triangle in a Penrose tiling is amalgamated with an adjacent large triangle.  The adjacency rules ensure that each small triangle has exactly one such large triangle adjacent to it. When each small triangle is amalgamated with a large triangle in this way, the resulting tile is geometrically similar to the original small tile, and the tiling produced is a Penrose tiling.
\end{definition}

We will distinguish between Penrose tilings by constructing an identifying index sequence.  
\begin{algorithm} [Penrose Index Sequence] 
Given a Penrose tiling by triangles, label the tiles $s$ or $l$ depending on whether they are small or large. 
\begin{enumerate}
\item Pick an arbitrary point $P$ interior to a tile. 
\item If $P$ lies in a small tile, record an $s$. If it lies in a large tile, record an $l$. 
\item Perform the Penrose composition process \ref{pdef} on the tiling, thus eliminating all original small triangles from the tiling. A new Penrose tiling will result in which the original large tiles are the new small tiles. 
\item Return to step 2. 
\end{enumerate}
Notice that the tiling alternates between a tiling by rhombs and a tiling by kites and darts.
This process may be repeated indefinitely, producing an infinite index sequence for the tiling relative to $P$. 
\end{algorithm}

To identify a tiling independent of the choice of $P$, we define an equivalence relation $\sim$ on the set $  X_{p} $ of index sequences of Penrose tilings. (Note that $ X_{p} $ is the set of all sequences of $s$'s and $l$'s in which an $s$ is always followed by an $l$.) Let 
$$ \{x_{n}\} \sim \{y_{n}\} \Leftrightarrow \exists\, m \text{ such that } x_{n} = y_{n} \,\forall n \geq m$$ 
that is, two sequences are in the same equivalence class if they eventually coincide. This yields the quotient set $ X_{p} / \sim $ representing the set of all Penrose tilings.

\section{Amman Tilings and Their Combinatorial and Geometric Properties}

\begin{figure}\center{\includegraphics[scale=.5]{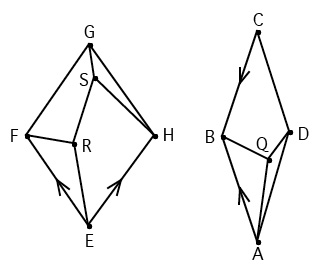}}\caption{The recomposition of Penrose thick and thin rhombs into Ammann tiles.}\label{recomp}\end{figure}
\begin{figure}\center{\includegraphics[scale=.5]{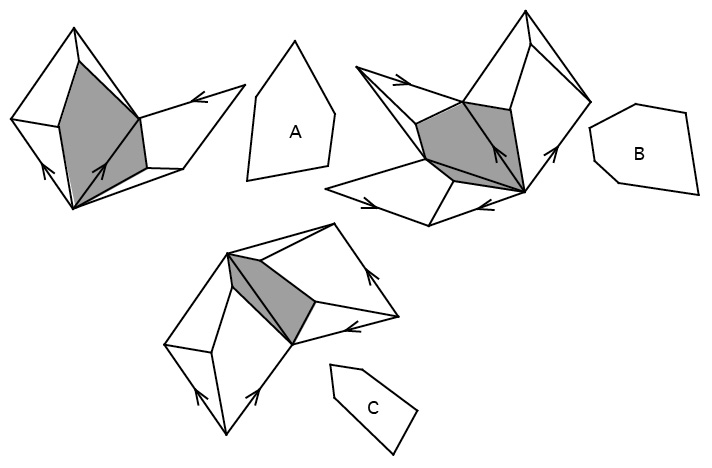}}\caption{The Ammann tiles created from recomposition.}\label{abc}\end{figure}

Amman tilings are derived from Penrose tilings by the process of \textit{recomposition} (\cite{GS}, p548) .

For the following algorithm we assume the orientation shown in Figure~\ref{recomp}.

\begin{algorithm}[Recomposition]\label{recompa}
Ammann's construction creates an Ammann tiling from a Penrose tiling by rhombs.
\begin{enumerate} 
\item Choose a point $Q$ within a single thin rhomb. Without loss of generality, we assume $Q$ is in the lower half of the rhomb.
\item Connect $Q$ to the three closest vertices of the rhomb. 
\item Copy this construction into all thin rhombs.
\item Copy $\triangle ABQ$ into the lower left of each thick rhomb such that $\overrightarrow{AB}\mapsto\overrightarrow{EF}$ and a new point is created $Q\mapsto R$ to yield $\triangle EFR$ within the thick rhomb.
\item Copy $\triangle DAQ$  into the upper right of each thick rhomb such that $\overrightarrow{DA}\mapsto\overrightarrow{GH}$ and a new point is created $Q\mapsto S$ to yield $\triangle GHS$ within the thick rhomb.
\item Connect points $R$ and $S$ within every thick rhomb.
\item Copy $\triangle GHS$ and $\triangle GHS$ to all of the thick rhombs.
\item Erase the edges of the original Penrose tiling.
\end{enumerate}
\end{algorithm}

This construction uniquely determines a tiling once $Q$ is chosen. 
In order to  ensure a non-periodic tiling of the plane, $Q$ must be chosen so that no two of $\abs{AQ},\,\abs{BQ},\,\abs{CQ},\,\abs{RS}$ are equal. If any two of them are equal, the three resulting Ammann prototiles may admit some periodic tilings of the plane.

\begin{figure}\center{\includegraphics[scale=.5]{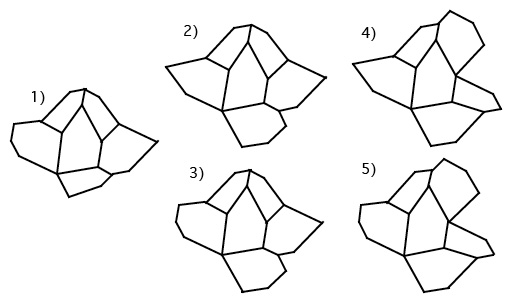}}\caption{The five coronas of an Ammann $A$ tile.}\label{acor}
\end{figure}

\begin{theorem}
 Given an Ammann tiling $\mathcal{T}$ constructed from a Penrose tiling $\mathcal P$, the five coronas illustrated in Figure~\ref{acor} are the only possible coronas of an $A$ tile of $\mathcal T$.
\end{theorem}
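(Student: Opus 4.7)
The plan is to exploit the bijection between the Ammann tiling and its underlying Penrose tiling set up by Algorithm~\ref{recompa}: since every Ammann tile lies inside a Penrose rhomb (or across the shared edge of two adjacent rhombs) in a completely rigid way, the first corona of a given $A$ tile is determined by a small patch of the Penrose tiling $\mathcal P$ around that tile. The goal is therefore to enumerate the finitely many such Penrose patches and check that they yield exactly the five Ammann coronas of Figure~\ref{acor}.

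First I would pinpoint where an $A$ tile sits inside the recomposition picture, identifying which Ammann triangle/quadrilateral in Figure~\ref{recomp} is congruent to $A$ and in which Penrose rhomb(s) it lives. The neighbors of $A$ inside its host rhomb are completely determined by Algorithm~\ref{recompa} and contribute the same tiles to every corona of every $A$ tile. The only freedom in the corona comes from the neighbors supplied by Penrose rhombs that share an edge or vertex with the host rhomb of $A$; this reduces the problem to controlling the Penrose configuration in a bounded neighborhood of the host rhomb.

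Next I would locate precisely which Penrose vertices of $\mathcal P$ lie on the boundary of $A$, and observe that the adjacent rhombs contributing to $\mathcal{C}(A)$ are exactly those meeting $A$'s host rhomb at these vertices and at the edges incident to them. By the vertex star classification recalled in Section~3 (Figure~\ref{pvstars}), there are only seven legal Penrose vertex stars, and any vertex star appearing at a relevant vertex must additionally be compatible with $A$'s host rhomb sitting in a specified corner of it. This compatibility check cuts the number of cases down substantially, and I would catalogue them in a table: each row a legal Penrose vertex star (with the host rhomb distinguished), each column the resulting Ammann tiles appearing in $\mathcal{C}(A)$. Many rows produce the same corona up to congruence, and I would show that the distinct outcomes collapse to exactly the five pictures of Figure~\ref{acor}.

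The main obstacle is the bookkeeping of this case analysis, especially when $A$ has more than one Penrose vertex on its boundary: then one must rule out joint configurations of two vertex stars that are individually legal but incompatible with sharing edges of the host rhomb. I expect this to be handled by appealing to Figure~\ref{pvstars} together with the uniqueness of the recomposition inside each rhomb, which forces the orientations of the tiles from adjacent rhombs and thereby excludes the spurious combinations.
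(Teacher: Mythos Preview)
Your proposal is correct and follows essentially the same route as the paper's (sketched) proof: identify the bijection between $A$ tiles and thick Penrose rhombs, reduce the corona of $A$ to the Penrose corona of its host thick rhomb, enumerate the latter using the Penrose vertex star atlas, and apply the recomposition to each case. One small slip: the Penrose vertex star atlas in Figure~\ref{pvstars} has eight vertex stars, not seven.
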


\begin{proof}
 (Sketch) As can be seen by inspection of Figure~\ref{abc}, there is a bijective correspondence between thick rhombs in $\mathcal P$ and type $A$ tiles in $\mathcal T$. So, to determine the possible coronas of $A$, we consider the possible Penrose coronas of a thick rhomb. The possible coronas of a Penrose thick rhomb are determined by the Penrose vertex atlas, and by conducting the recomposition algorithm on the tiles of these Penrose coronas it can be seen that these five are the only possible coronas of an Ammann type A tile.
\end{proof}

\begin{theorem}
The set of Ammann coronas of $A$ is a reduced corona atlas.
\end{theorem}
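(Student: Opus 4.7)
Recall that a reduced corona atlas is a subset of the full corona atlas whose union covers every tile of $\mathcal{T}$. By the previous theorem the five coronas in Figure~\ref{acor} exhaust the possible coronas of a type $A$ tile, so all five are members of the corona atlas of $\mathcal{T}$. Because each corona $\mathcal{C}(T)$ contains $T$ itself, every type $A$ tile is automatically covered. The entire proof therefore reduces to showing that every tile of type $B$ and every tile of type $C$ shares more than one boundary point (equivalently, an edge) with some type $A$ tile.

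The plan is to exploit the bijection between thick Penrose rhombs in $\mathcal{P}$ and $A$ tiles in $\mathcal{T}$ that was established in the preceding theorem. Step 1: enumerate, by directly reading off the recomposition algorithm (Algorithm \ref{recompa}) and Figures \ref{recomp} and \ref{abc}, which pieces inside a given thick rhomb are $A$ tiles and which are $B$ or $C$ tiles. Every $B$ or $C$ piece produced within a thick rhomb shares at least one full edge with the $A$ tile in that same rhomb, by construction; these cases are therefore immediate. Step 2: handle the $B$ and $C$ tiles that come from the subdivision of a thin rhomb (the three triangles formed by joining $Q$ to the three nearest vertices). For each of these three triangles I will identify the Penrose edge of the thin rhomb that it meets, and use the fact that every edge of every Penrose rhomb is shared with some adjacent Penrose rhomb. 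Using the Penrose vertex star atlas alluded to in the previous proof, I will check that the collection of Penrose configurations forces each of the three triangles inside a thin rhomb to share an edge with an $A$ tile sitting inside a neighboring thick rhomb.

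Concretely, the verification in Step 2 will be a finite case check: for each of the legal vertex stars of a Penrose tiling (see Figure \ref{pvstars} cited in the paper), I will trace which rhomb lies across each edge of the central thin rhomb, and confirm that in every case each of the three triangular subpieces of the thin rhomb borders a thick rhomb on a full edge, hence borders the $A$ tile inside that thick rhomb. Combining Steps 1 and 2 shows that every tile of $\mathcal{T}$ lies in at least one corona of some $A$ tile, which is exactly the statement that these coronas form a reduced corona atlas.

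The main obstacle I anticipate is Step 2: verifying that after the original Penrose edges are erased, the triangular pieces of a thin rhomb really do share a full edge with an $A$ tile across the former thin/thick rhomb boundary (rather than merely touching it at a single vertex). This is a geometric check that depends on the precise placement of $R$ and $S$ in each thick rhomb relative to $Q$ in each thin rhomb, and must be done case-by-case against the legal Penrose vertex stars, but it is entirely mechanical given the recomposition recipe.
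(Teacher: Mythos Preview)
Your overall strategy---show that every $B$ and every $C$ tile shares an edge with some $A$ tile---is exactly what the paper does. But your execution rests on a misreading of the recomposition: the three triangles obtained inside a thin rhomb by joining $Q$ to three vertices are \emph{not} themselves Ammann tiles. After the Penrose edges are erased, each such triangle merges with pieces of neighbouring rhombs to form a $B$ or $C$ tile that straddles at least one former Penrose edge (cf.\ Figure~\ref{cutatiles}: a $B$ tile is cut twice by Penrose edges, a $C$ tile once). So Step~2 as you phrase it---checking whether a triangular piece of a thin rhomb borders a thick rhomb---is not the right question; what matters is whether the full Ammann tile containing that piece meets an $A$ tile along an edge.

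Once this is corrected, your two steps collapse into the paper's much shorter argument. The paper simply observes from Figure~\ref{abc} that every $C$ tile necessarily incorporates a piece of a thick rhomb and therefore has an $A$ in its corona automatically. For $B$ tiles, rather than run an exhaustive vertex-star check, the paper isolates the single obstruction: the only way a $B$ tile could avoid touching any thick rhomb is if it is assembled from three thin rhombs meeting as in Figure~\ref{illegal}, and that configuration, while locally legal, is not globally legal (no tile fits between the lines $m$ and $n$). This handles all cases at once, so the case-by-case enumeration over the Penrose vertex atlas that you propose is unnecessary.
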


\begin{proof}
   As in the previous theorem, let $\mathcal P$ be the Penrose tiling corresponding to an Ammann tiling $\mathcal T$. Assume for contradiction that the set of coronas of $A$ does not cover $\mathcal T$. Then, there is at least one corona of a $B$ or $C$ tile that does not contain any $A$ tiles. 
   
 \begin{figure}\center{\includegraphics [scale=.5]{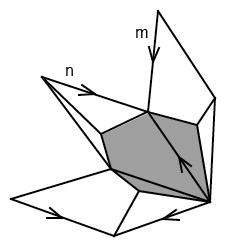}}\caption{A locally legal construction of a $B$ tile that is not globally legal.}\label{illegal}\end{figure}
 
   By inspection of Figure~\ref{abc}, we can see that a thick rhomb is needed to create a $C$ tile, so every $C$ tile has an $A$ tile in its corona. On the other hand, the patch shown in Figure~\ref{illegal} shows that it it is locally legal to assemble three thin rhombs with Ammann markings to form a $B$ tile. However, this configuration is not globally legal, as is immediately apparent when we try to put another tile between lines $m$ and $n$. So, this configuration is impossible, and this set of five coronas of type $A$ tiles covers $\mathcal T$, and is thus a reduced corona atlas.
\end{proof}

\begin{figure}\center{\includegraphics[scale=.5] {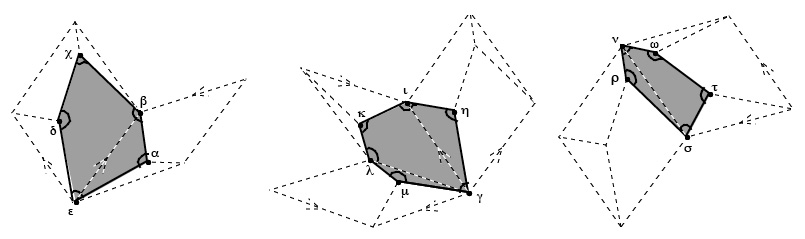}}\caption{}\label{abcangles}\end{figure}

\begin{proposition} \label{cangles} Let $\theta=\frac{\pi}{5}$. 
Based on the labeling in Figure~\ref{abcangles}, the following angle relations hold for Ammann tiles.
\begin{enumerate}
\item $\varepsilon=\gamma=\nu=2\theta $
\item
$\iota=\lambda=4\theta$ \item
$\beta+\sigma=6\theta$ \item
$\chi+\rho+\omega=10\theta$ \item
$\delta+\tau+\eta=10\theta$ \item
$\alpha+\kappa+\mu=10\theta $\item
$\alpha=\eta $\item
$\mu=\rho$.
\end{enumerate}
\end{proposition}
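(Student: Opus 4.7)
The proof is pure angle-chasing, and the plan is to exploit three facts provided by the construction: (i) every angle of the underlying Penrose rhombs is an integer multiple of $\theta=\pi/5$, specifically $\theta$ and $4\theta$ for the thin rhomb and $2\theta$ and $3\theta$ for the thick rhomb; (ii) steps (4) and (5) of Algorithm~\ref{recompa} copy the triangles $\triangle ABQ$ and $\triangle DAQ$ \emph{rigidly} into the thick rhomb as $\triangle EFR$ and $\triangle GHS$, so corresponding angles are equal; and (iii) the interior angles at every vertex of $\mathcal T$ sum to $2\pi=10\theta$, while the interior angles of any triangle sum to $\pi=5\theta$.

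First I would dispatch items (1), (2), (7), and (8). The angles $\varepsilon,\gamma,\nu,\iota,\lambda$ sit at corners of Ammann tiles that coincide with corners of the original Penrose rhombs (corners that are not subdivided by the new $Q$-, $R$-, or $S$-edges). Reading Figure~\ref{abcangles} against Figure~\ref{recomp} identifies each one as a specific Penrose rhomb angle, giving $\varepsilon=\gamma=\nu=2\theta$ (the acute angle of the thick rhomb) and $\iota=\lambda=4\theta$ (the obtuse angle of the thin rhomb). The equalities $\alpha=\eta$ and $\mu=\rho$ then come directly from fact (ii): the pairs are corresponding angles in a copied triangle and its image, so the rigid-motion identification forces equality.

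The sum relations (3)--(6) are then read off from vertex figures in the recomposed tiling. The relations $\chi+\rho+\omega=10\theta$, $\delta+\tau+\eta=10\theta$, and $\alpha+\kappa+\mu=10\theta$ are full-turn conditions at the interior vertices of the thick-rhomb recomposition, namely the vertices $R$, $S$, and the remaining newly-created vertex, using fact (iii). The relation $\beta+\sigma=6\theta$ is a partial-vertex identity along the boundary of a rhomb where these two angles together with a known Penrose-rhomb corner of size $4\theta$ complete the full $10\theta$ turn; equivalently, it expresses that $\beta$ and $\sigma$ jointly occupy the supplementary region after a thin-rhomb angle $4\theta$ is removed from a full turn shared with some other tiles of known angle sum.

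The main obstacle here is not computational but bookkeeping: one must unambiguously match each Greek label in Figure~\ref{abcangles} with its preimage either as a Penrose rhomb angle or as an angle in one of the copied triangles $\triangle ABQ$, $\triangle DAQ$. Once this correspondence is pinned down, each of the eight equations reduces to either (a) a direct identification of an unchanged Penrose angle, (b) a congruent-triangle equality, or (c) an instance of the $2\pi$ vertex-sum identity; no trigonometric computation or information about the free parameter $Q$ is needed, which is why the relations hold for every Ammann tiling at once.
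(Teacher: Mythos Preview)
Your approach is essentially the paper's: both reduce the eight relations to angle-chasing in the recomposed rhombs, using (i) that the Penrose corner angles are multiples of $\theta$, (ii) the rigid-copy congruences from Algorithm~\ref{recompa}, and (iii) full-turn identities at the new interior vertices $Q$, $R$, $S$.  The paper executes this by introducing auxiliary lowercase angle labels inside the two rhombs (Figure~\ref{recomp2}), writing each Greek angle as a sum of these, and then verifying the identities arithmetically; your sketch is the same argument stated one level of abstraction higher.  Two small points to correct when you do the bookkeeping: the three full-turn identities (4)--(6) live at $Q$, $R$, and $S$, only the last two of which lie in the \emph{thick} rhomb; and relation (3) is not a $4\theta$-complement at a single vertex but rather the observation that the constituent pieces of $\beta$ and $\sigma$ together exhaust the two $3\theta$ corners of the thick rhomb, giving $\beta+\sigma=3\theta+3\theta$.
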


\begin{figure}\center{\includegraphics [scale=.5]{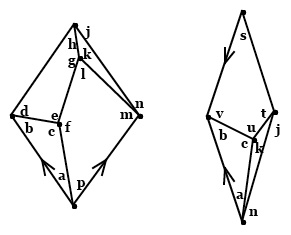}}\caption{}\label{recomp2}\end{figure}

\begin{proof}
Since the thick rhombs have angles of $\frac{2}{5}\pi$ and $\frac{3}{5}\pi$, and the thin rhombs have angles of $\frac{1}{5}\pi$ and $\frac{4}{5}\pi$, we see in Figure~\ref{recomp2} that
\begin{enumerate}
\item $a+p=h+j=\frac{2}{5}\pi=2\theta$
\item$ b+d=n+m=\frac{3}{5}\pi=3\theta$
\item $a+n=s=\frac{1}{5}\pi=\theta$
\item $b+v=t+j=\frac{4}{5}\pi=4\theta.$
\end{enumerate}

Furthermore, we can see in Figure~\ref{abcangles} that
$\alpha= c, \quad 
 \beta=b+m, \quad  
 \chi=l   ,\quad 
 \delta=f ,\quad 
\\ \varepsilon=p+a=
 \gamma=a+s+n,\quad 
\eta=    c,\quad 
 \iota=b+v,\quad 
 \kappa=u,\quad 
 \lambda=t+j ,\quad 
 \mu=k,\quad 
\\ \nu=h+j,\quad 
 \rho=k,\quad 
 \sigma=d+n,\quad 
 \tau=e,\quad 
 \omega=g.$
It is easy to verify that these relations give us the desired result.
\end{proof}

\begin{figure}\center{\includegraphics[width=3in] {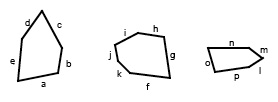}}\caption{}\label{abcedges}
\end{figure}

\begin{proposition}\label{cedges}
Referring to the labeling in Figure~\ref{abcedges}, the following edge congruences hold.
\begin{enumerate} \item $a=c=e=f=g=n $ \item$ b=h=i=o$\item $j=k=l=m$  \item $d=p$\end{enumerate}
\end{proposition}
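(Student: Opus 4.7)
The plan is to reduce the proposition to the observation that Algorithm~\ref{recompa} introduces only four distinct segment lengths into the tiling, namely the three lengths $\abs{AQ}$, $\abs{BQ}$, $\abs{DQ}$ from step~(2), together with the single new length $\abs{RS}$ from step~(6). Every other segment that survives the erasure in step~(8) is an isometric image of one of these four.

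First, I would record the congruences forced by the construction. By step~(4), the isometry carrying $\triangle ABQ$ to $\triangle EFR$ gives $\abs{ER}$ and $\abs{FR}$ as isometric copies of $\abs{AQ}$ and $\abs{BQ}$ (in some order determined by the orientation $\overrightarrow{AB}\mapsto\overrightarrow{EF}$); by step~(5), the isometry carrying $\triangle DAQ$ to $\triangle GHS$ similarly gives $\abs{GS}$ and $\abs{HS}$ as isometric copies of $\abs{DA}$ and $\abs{AQ}$, where $\abs{DA}=1$ is a rhomb edge and therefore erased in step~(8). Steps~(3) and (7) propagate this local data to every thin and thick rhomb of the underlying Penrose tiling $\mathcal P$, so no further segment lengths are produced anywhere in $\mathcal T$.

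Next, I would identify each of the labeled edges $a,\dots,p$ in Figure~\ref{abcedges} with one of the four basic lengths by tracing it back through the algorithm. Going tile-by-tile through the three prototiles $A$, $B$, $C$ of Figure~\ref{abc}, each labeled edge is either one of $\overline{AQ}, \overline{BQ}, \overline{DQ}, \overline{RS}$ or an isometric copy produced in step~(4) or step~(5); in particular, crossing an erased Penrose edge glues together an edge from a thin rhomb with its isometric twin in the adjacent thick rhomb, but does not alter the length. Collecting labels by which of the four source segments they represent partitions $\{a,\dots,p\}$ into exactly the four classes $\{a,c,e,f,g,n\}$, $\{b,h,i,o\}$, $\{j,k,l,m\}$, $\{d,p\}$ claimed in the proposition.

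The main obstacle is purely one of bookkeeping: with the labeling $a,\dots,p$ visible only in Figure~\ref{abcedges}, one must carefully honor the orientations $\overrightarrow{AB}\mapsto\overrightarrow{EF}$ and $\overrightarrow{DA}\mapsto\overrightarrow{GH}$ from steps~(4)–(5) so as not to swap a copy of $\abs{AQ}$ with a copy of $\abs{BQ}$, and must recognize which two small segments meet across each erased Penrose edge to become a single edge of an Ammann tile. Once this correspondence between every labeled edge and its source segment is pinned down, the four congruences are immediate consequences of the rigidity of the isometries used in the recomposition.
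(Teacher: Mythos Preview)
Your approach is exactly what the paper's proof amounts to: the paper's entire argument is the single sentence ``evident by inspection of Figures~\ref{abcedges} and \ref{abcangles},'' and you are just making that inspection explicit by tracing each labeled edge back to one of the four construction segments $\abs{AQ}$, $\abs{BQ}$, $\abs{DQ}$, $\abs{RS}$.

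There is one bookkeeping slip worth fixing. Under the isometry in step~(5) you have $D\mapsto G$, $A\mapsto H$, $Q\mapsto S$, so the two non-rhomb sides satisfy $\abs{GS}=\abs{DQ}$ and $\abs{HS}=\abs{AQ}$; it is $\abs{GH}$ that equals the rhomb side $\abs{DA}=1$ and is erased in step~(8). As written, your sentence makes $\abs{GS}$ a copy of $\abs{DA}$, which would introduce a fifth surviving length and contradict your own opening claim that only four lengths appear. With that correction, the partition into the four classes $\{a,c,e,f,g,n\}$, $\{b,h,i,o\}$, $\{j,k,l,m\}$, $\{d,p\}$ follows just as you outline.
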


\begin{proof}
The proposition is evident by inspection of Figures~\ref{abcedges} and \ref{abcangles}.
\end{proof}

\begin{theorem}\label{abcaperiodic}
Let $\mathcal P$ be a Penrose tilng and let $\mathcal T$ be a tiling obtained via the recomposition process in Algorithm \ref{recompa}. Let $\{A, B, C\}$ denote the protoset of $\mathcal T$ as labeled in Figure \ref{abc}. Then if $\mathcal T'$ is any other tiling admitted by $\{A, B, C\}$ then there exists a Penrose tiling $\mathcal P'$ such that $\mathcal T'$ is obtained from $\mathcal P'$ by recomposition.
\end{theorem}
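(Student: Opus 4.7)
The plan is to show that any tiling $\mathcal{T}'$ admitted by $\{A, B, C\}$ can be canonically resolved into a tiling by Penrose rhombs satisfying the Penrose adjacency rules, with $\mathcal{T}'$ recovered as the recomposition of that resolved tiling. The strategy is to use Propositions \ref{cangles} and \ref{cedges} to pin down the possible local configurations, reconstruct candidate Penrose rhombs from groups of Ammann tiles, and then verify the Penrose markings globally.

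First, I would use the edge-length data of Proposition \ref{cedges} (which gives only four distinct edge lengths among $\{A,B,C\}$) together with the angle data of Proposition \ref{cangles} to enumerate all vertex figures that can legally occur in an $\{A,B,C\}$-tiling. Edges can only be identified with edges of the same length, and the angles around each vertex must sum to $2\pi$; running through these constraints should yield a finite list of vertex figures that coincides exactly with the list obtained by applying Algorithm \ref{recompa} to the Penrose vertex atlas.

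Second, I would reconstruct a candidate Penrose tiling $\mathcal{P}'$ by erasing the new Ammann edges and reinstating the original Penrose edges. Each $A$ tile, together with the two adjacent triangular pieces forced by edge matching, fills out a thick Penrose rhomb; each $B$ tile, together with its two adjacent triangular pieces, fills out a thin Penrose rhomb. The local enumeration from the previous step ensures this grouping is unambiguous wherever an $A$ or $B$ tile appears. To guarantee that every $C$ (and every $B$) tile actually belongs to such a group, I would invoke the global argument used in the preceding reduced-corona-atlas discussion (the configuration of Figure \ref{illegal} is locally legal but globally prohibited), showing that no $B$ or $C$ tile can be isolated from all $A$ tiles.

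Third, I would verify that $\mathcal{P}'$ satisfies the Penrose adjacency rules. Because each reconstructed Penrose edge sits at a point whose local Ammann neighborhood was matched in step one to a legal local Penrose configuration, the edge-and-angle markings automatically agree across every shared rhomb boundary. Hence $\mathcal{P}'$ is a legal Penrose tiling, and by construction $\mathcal{T}'$ is the tiling obtained from $\mathcal{P}'$ via Algorithm \ref{recompa}. The main obstacle is the passage from local to global legality: local edge and angle matching alone still admits the spurious patch of Figure \ref{illegal}, and one must rule out any tiling using such a patch. The required argument extends the one already sketched there -- tracing forced placements outward from the problematic patch until a contradiction appears -- but now it must be done for arbitrary $\{A,B,C\}$-tilings without presupposing an underlying Penrose tiling, so some care is needed to ensure the extension uses only Ammann-protoset constraints.
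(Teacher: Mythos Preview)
Your overall strategy---enumerate the locally legal vertex stars using the angle and edge constraints, then argue that only the Penrose-derived ones survive---is exactly the route the paper takes. However, your step one contains a real gap: you assert that the enumeration ``should yield a finite list of vertex figures that coincides exactly with the list obtained by applying Algorithm~\ref{recompa} to the Penrose vertex atlas,'' and this is false. The paper's enumeration produces fourteen Penrose-derived vertex stars \emph{plus ten additional locally legal vertex stars} (Figure~\ref{tenillegals}) that satisfy all the edge-length and angle-sum constraints of Propositions~\ref{cangles} and~\ref{cedges} but do not arise from any Penrose patch. Each of these ten must be eliminated by a separate forcing argument (e.g., vertices $7$ and $9$ meeting creates an unfillable triangle; two copies of vertex $6$ meeting forces the illegal $(7,11,9)$ configuration; the $(14,6,5,2)$ star forces an impossible $(8,8)$ vertex; and so on). You acknowledge the local-to-global obstacle at the end, but you cite only the single configuration of Figure~\ref{illegal}, which is a different issue (it concerns the corona atlas, not the vertex-star atlas) and does not cover the ten spurious vertex stars.

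A secondary point: your reconstruction in step two (``each $B$ tile, together with its two adjacent triangular pieces, fills out a thin Penrose rhomb'') presupposes that the surrounding pieces are arranged exactly as in a recomposed Penrose tiling, which is precisely what you are trying to prove. The paper avoids this circularity by working entirely at the level of vertex stars: once the vertex-star atlas of $\mathcal{T}'$ is shown to equal the fourteen Penrose-derived stars, the Penrose tiling $\mathcal{P}'$ is determined because the Penrose vertex atlas determines Penrose tilings. Your constructive regrouping could be made to work, but only after the full vertex-star case analysis is complete, at which point it becomes redundant.
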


\begin{proof}
Because the Ammann vertex atlas determines all Ammann tilings and all Ammann tilings derived from Penrose tilings are non-periodic by construction, it is sufficient to show that all Ammann vertex stars are derived from globally legal Penrose patches.

\begin{figure}\center{\includegraphics[scale=.5] {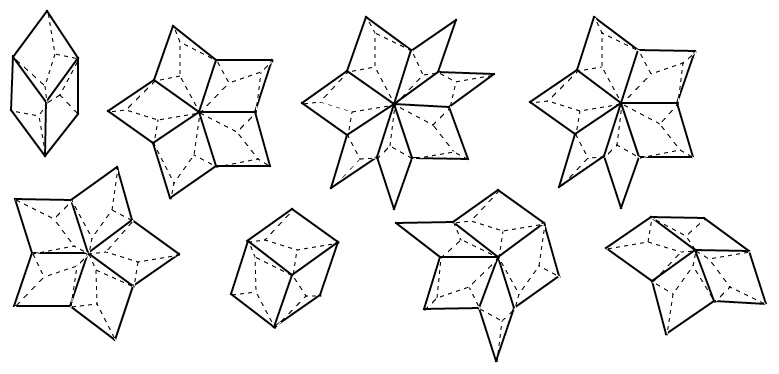}}\caption{The eight globally legal Penrose vertex stars (\cite{Senechal}, p177).}\label{pvstars}\end{figure}

\begin{figure}\center{\includegraphics[scale=.5] {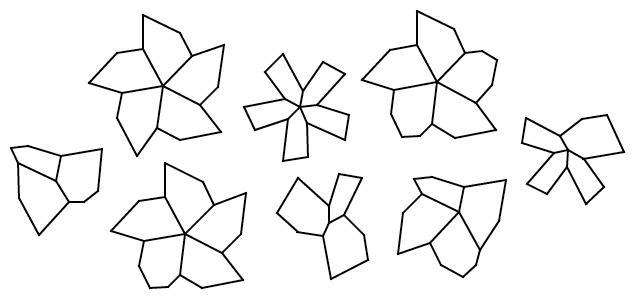}}\caption{The eight Ammann vertex stars derived from the eight Penrose vertex stars in Figure~\ref{pvstars}.}\label{avs1}\end{figure}

From the Penrose vertex atlas shown in Figure~\ref{pvstars}, we get the eight Ammann vertex stars of $\mathcal T$ shown in Figure~\ref{avs1}. Since these were constructed from the Penrose vertex star atlas, they are globally legal.

\begin{figure}\center{\includegraphics[scale=.4] {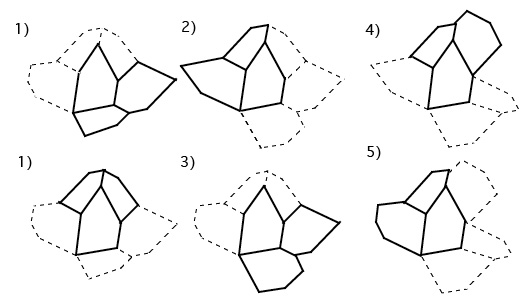}}\caption{Six globally legal vertex stars obtained from the five coronas of a type $A$ tile.}\label{acoronas2}\end{figure}

The recomposition process added the three vertices $Q$, $R$, and $S$, so we now consider their possible vertex stars. The five coronas of $A$ give us six more globally legal vertex stars shown in Figure~\ref{acoronas2}. These were also constructed from the recomposition of a Penrose tiling, so they are globally legal as well. This gives us fourteen globally legal vertex stars of $\mathcal T$.

\begin{figure}\center{\includegraphics[scale=.5] {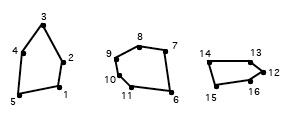}}\caption{}\label{abcnumbers}\end{figure}
\begin{figure}\center{\includegraphics [scale=.4]{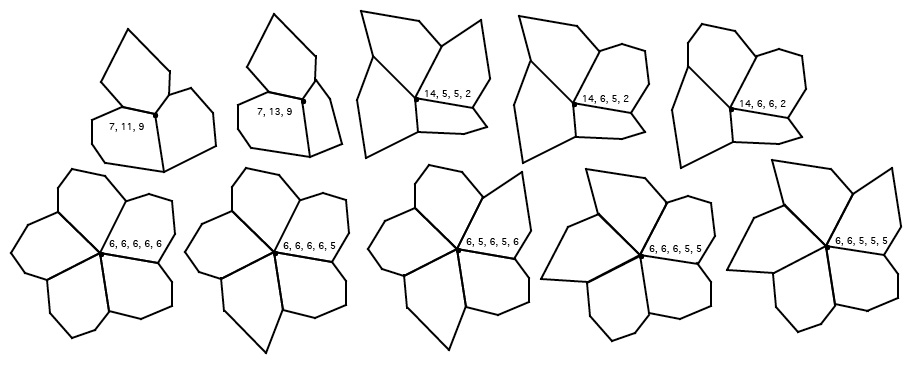}}\caption{}\label{tenillegals}\end{figure}
\begin{figure}\center{\includegraphics [scale=.4]{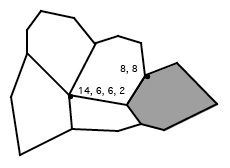}}\caption{}\label{8,8}\end{figure}

Next, we use the angle and edge relations established in Propositions \ref{cangles} and \ref{cedges} to check if there are any other locally legal Amman vertex stars, and we find that there are only the ten shown in Figure~\ref{tenillegals}.
 (This is where we use the condition that the segments constructed in Algorithm \ref{recompa} are of different lengths. If any were instead the same length, there would be more locally legal vertex stars than those shown here.) The central vertex of each vertex star in Figure~\ref{tenillegals} is labeled with the numbers of the vertices that meet there as per the numbering in Figure~\ref{abcnumbers}.

\begin{claim} Each of the ten vertex configurations in Figure~\ref{tenillegals} is not globally legal.\end{claim}

\begin{figure}\center{\includegraphics [scale=.5]{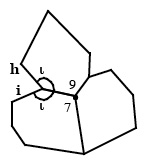}}\caption{No tile has two adjacent sides of length $i=h$ and angle between them of $10\theta-2\iota=2\theta$, so the empty triangle on the left can never be filled.}\label{9+7=!}\end{figure}

Notice that (7,13,9) and (7,11,9) are not globally legal because whenever vertices 7 and 9 meet, a space is created that cannot be filled (see Figure~\ref{9+7=!}).
Furthermore, when two instances of vertex 6 meet, vertices 7 and 11 also meet. But if vertices 7 and 11 meet, then vertex 9 meets there as well, because $\eta+\mu=10\theta-\kappa$. However, as stated above, (7,11,9) is not globally legal, so a vertex star where two instances of vertex 6 meet is not globally legal.
Therefore, (6,6,6,6,6), (6,6,6,6,5), (6,6,6,5,5), (6,6,5,5,5), (6,5,6,5,6), and (2,14,6,6) are not globally legal.

Now, we focus our attention to vertex stars (14,6,5,2) and (14,6,6,2). In both cases, only a $B$ tile can fit adjacent to the $C$ and $B$ tiles on the right side of the vertex stars (see Figure \ref{8,8}) because the angle between them is $10\theta-\nu-\rho=\kappa$ and the edge lengths are $m$ and $h$. This creates vertex (8,8) in which a tile would fit with a vertex angle of $2\theta$ between congruent edges of length $h=i=b=o$. Since no such tile exists, the vertex star (8,8) cannot be completed, so vertex stars (14,6,5,2) and (14,6,6,2) are not globally legal.
Finally, the left side of vertex star (14,5,5,2) contains vertex (3,4). Two edges of length $d$ meet there at an angle of $10\theta-\chi-\delta$. Since no tile fits in this space, the vertex star is not globally legal.
 Therefore, each of the ten vertex stars in Figure \ref{tenillegals} is not globally legal, and the vertex star atlas of $\mathcal T$ contains only the aforementioned fourteen vertex stars, which are derived from $\mathcal P$. This proves the claim.

The Penrose vertex star atlas of eight vertex stars completely determines all Penrose tilings (\cite{Senechal}, p177). So, the set of fourteen vertex stars of $\mathcal T$ that are derived from $\mathcal P$ completely determines all  tilings that can be derived from a Penrose tiling by the recomposition process in Algorithm \ref{recompa}. But the set of vertex stars of $\mathcal T$ is identical to the vertex star atlas of $\mathcal T'$. Therefore, an arbitrary Ammann tiling can be derived from a corresponding Penrose tiling. 
\end{proof}

\begin{corollary}
The protoset $\{A,B,C\}$ is an aperiodic protoset.
\end{corollary}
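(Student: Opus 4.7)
The plan is to deduce aperiodicity of $\{A,B,C\}$ directly from Theorem~\ref{abcaperiodic} by showing that any translational symmetry of a tiling admitted by $\{A,B,C\}$ must descend to a translational symmetry of an associated Penrose tiling, which is impossible. So let $\mathcal T'$ be any tiling admitted by $\{A,B,C\}$. By Theorem~\ref{abcaperiodic}, there is a Penrose tiling $\mathcal P'$ such that $\mathcal T'$ is obtained from $\mathcal P'$ by the recomposition of Algorithm~\ref{recompa}.

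The heart of the argument is to show that the recomposition admits a canonical inverse, so that $\mathcal P'$ is recoverable from $\mathcal T'$. Examining Figure~\ref{abc} together with Propositions~\ref{cangles} and~\ref{cedges}, each thin Penrose rhomb is partitioned into three $B$ tiles meeting at the internal point $Q$, while each thick Penrose rhomb is partitioned into one $A$ tile together with portions of $B$ and $C$ tiles meeting at the internal points $R$ and $S$. Because the condition that no two of $\abs{AQ},\abs{BQ},\abs{CQ},\abs{RS}$ are equal forces the vertex stars at $Q$, $R$, and $S$ to be geometrically distinguishable from one another and from the vertex stars at the original Penrose corners, the vertices of $\mathcal P'$ — and hence its edges — are uniquely identifiable from the data of $\mathcal T'$ alone. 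Consequently, if $v$ is any translation preserving $\mathcal T'$, it must send vertices to vertices of the same type, in particular preserving the reconstructed set of original Penrose vertices and edges; hence $v$ preserves $\mathcal P'$ as well. Any pair of linearly independent translational symmetries of $\mathcal T'$ would therefore force $\mathcal P'$ to be periodic, contradicting the non-periodicity of Penrose tilings established in Section~3. Since $\mathcal T'$ was arbitrary, every tiling admitted by $\{A,B,C\}$ is non-periodic, which is exactly the definition of aperiodicity for the protoset.

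The main obstacle I anticipate is the canonical reconstruction step: one must verify rigorously that the genericity assumption on the four segment lengths is enough to distinguish original Penrose vertices from the newly added vertices $Q$, $R$, and $S$ in every possible Ammann vertex configuration, so that a translational symmetry of $\mathcal T'$ really does permute these four vertex classes among themselves. Fortunately, this can be read off from the finite vertex star atlas of $\mathcal T$ already produced in the proof of Theorem~\ref{abcaperiodic}, together with the edge and angle data of Propositions~\ref{cangles} and~\ref{cedges}, so it reduces to a finite local check rather than a new global argument.
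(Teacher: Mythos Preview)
Your approach is essentially the paper's: invoke Theorem~\ref{abcaperiodic} to realize any $\{A,B,C\}$-tiling as the recomposition of a Penrose tiling, then transfer non-periodicity from the Penrose side. The paper's proof is a two-line assertion that leaves the descent-of-symmetry step implicit; you are filling in exactly that step by arguing that $\mathcal P'$ is canonically recoverable from $\mathcal T'$, so any translation fixing $\mathcal T'$ fixes $\mathcal P'$.

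One descriptive inaccuracy: a thin Penrose rhomb is \emph{not} partitioned into three $B$ tiles meeting at $Q$. After the Penrose edges are erased, the $B$ and $C$ tiles straddle the former rhomb boundaries (only the $A$ tile sits entirely inside a single thick rhomb; cf.\ Figure~\ref{cutatiles}). This does not damage your argument, since the reconstruction of $\mathcal P'$ really proceeds the way the paper indicates in Figure~\ref{cutatiles}: each Ammann prototile is cut in a single prescribed way by the underlying Penrose edges, and the fourteen-element vertex star atlas from the proof of Theorem~\ref{abcaperiodic} already distinguishes the added vertices $Q,R,S$ from the original Penrose vertices. So your ``finite local check'' is available, just not via the partition you described.
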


\begin{proof}
Since Penrose tilings are non-periodic, by the previous theorem so too are $\mathcal T$ and $\mathcal T'$. Therefore, A, B, and C form an aperiodic protoset.
\end{proof}

\begin{remark}
The protoset $\{A,B,C\}$ has no adjacency rules, unlike the underlying Penrose rhombs. These results motivate the following definition.
\end{remark}

\begin{definition} \label{Adef} Using the edge labeling of Figure~\ref{abcedges} and the angle labeling of Figure~\ref{abcangles}, we define an Ammann tiling to be a tiling of the plane admitted by a protoset of three tiles, two pentagons and a hexagon, satisfying the edge relations
of Proposition \ref{cedges}
and the angle relations of Proposition \ref{cangles}
\end{definition}

\begin{remark}
The angle and edge conditions in the previous definition are equivalent to specifying the vertex star atlas of Ammann types.
\end{remark}

\begin{remark}
There are several types of tilings already referred to as Ammann tilings in the literature. Therefore, one should consider Definition~\ref{Adef} to be local to this paper.
\end{remark}

\section{Iterating Ammann Tilings}

\begin{figure}
\center{\includegraphics[scale=.5]{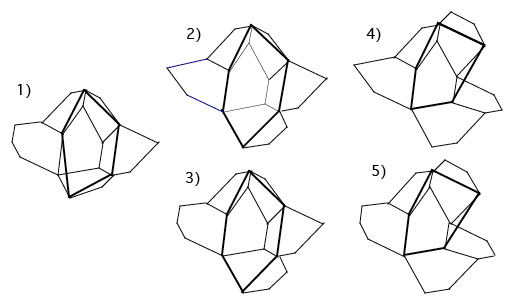}

\includegraphics[scale=.5]{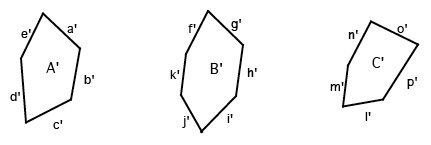}\caption{Algorithm \ref{ait} creates the three tiles of $\mathcal T'$ from the coronas of $A$ in $\mathcal T$. Note that if the tiles were rescaled by a factor of $\frac{1}{\phi}$ the new tiles would be similar in size to the originals.}\label{iter}}
\end{figure}

\begin{algorithm}[Ammann Iteration]\label{ait}
   Let $\mathcal T$ be an Ammann tiling.  By connecting vertices within the five coronas of $A$ as shown in Figure~\ref{iter} and then erasing the original edges, we create a new tiling of the plane. Examining all possible arrangements of the five coronas, we see that this process produces a tiling by the three prototiles shown in Figure~\ref{iter}. We label the new tiling $\mathcal T'$ and call the new tiles from corona 1 the type $A'$ tiles of $\mathcal T'$, the tiles from coronas 2 and 3 we call the type $B'$ tiles, and the tiles from corona 4 and 5 we call the type $C'$ tiles. The angles and edges of the tiles of $\mathcal T'$ are labeled in reverse, \textit{e.g.} if the tiles of $\mathcal T$ are labeled counter-clockwise, then those of $\mathcal T'$ are labeled clockwise.
    \end{algorithm}

\begin{theorem}
     Let $\mathcal T$ be an Ammann tiling.
     The tiling $\mathcal T'$ obtained via Algorithm \ref{ait} is also an Ammann tiling. 
\end{theorem}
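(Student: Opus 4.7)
The plan is to verify that $\mathcal T'$ satisfies Definition~\ref{Adef}: it must be a tiling of the plane admitted by a protoset of two pentagons and a hexagon whose prototiles obey the edge congruences of Proposition~\ref{cedges} and the angle relations of Proposition~\ref{cangles}.

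First I would check that Algorithm~\ref{ait} yields a well-defined tiling. Since the five coronas of type $A$ form a reduced corona atlas for $\mathcal T$, every point of the plane lies in some corona of $A$; the new edges drawn inside each corona depend only on the local configuration of that corona, so the construction is consistent on overlaps and produces a genuine edge-to-edge partition of the plane. Inspecting Figure~\ref{iter} confirms that the three resulting prototiles $A'$, $B'$, $C'$ have exactly the required combinatorial type, namely one hexagon and two pentagons.

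Next I would verify the angle and edge relations. Each vertex of a new prototile sits at a vertex of $\mathcal T$, so each interior angle of $A'$, $B'$, $C'$ is a sum of a subset of the angles $\alpha,\beta,\gamma,\dots,\omega$ that appear at that vertex of $\mathcal T$. Using Proposition~\ref{cangles} to evaluate these sums as multiples of $\theta=\pi/5$, I would read off the primed labels of the new tiles and check items (1)--(8) of Proposition~\ref{cangles} directly. An analogous inventory for the new edges, which are either pre-existing edges of $\mathcal T$ or diagonals of specific two- and three-tile subconfigurations whose lengths are determined by Proposition~\ref{cedges} together with the law of cosines, verifies items (1)--(4) of Proposition~\ref{cedges} for the primed protoset.

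The hard part will be the combinatorial bookkeeping: for each of the five corona types one must identify precisely which vertices of $\mathcal T$ become corners of which new tile and which collection of old angles sums to each new interior angle, and then confirm that the same identifications arise in every other corona in which a given new tile appears, so that the classes $A'$, $B'$, $C'$ consist of genuinely congruent tiles. Once that table is assembled, the angle and edge relations of Propositions~\ref{cangles} and~\ref{cedges} follow by direct substitution, and the reversal of orientation noted in Algorithm~\ref{ait} is automatic from the fact that each new tile is swept out opposite to the way its containing corona is swept. Combining these with the tiling property established in the first step completes the verification against Definition~\ref{Adef}.
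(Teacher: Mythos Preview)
Your plan is correct and follows essentially the same route as the paper: verify Definition~\ref{Adef} by checking, corona by corona, that the primed prototiles satisfy the edge congruences of Proposition~\ref{cedges} and the angle relations of Proposition~\ref{cangles}. The paper does exactly this, reading the relations off Figures~\ref{iter} and~\ref{135}; your extra remark about well-definedness via the reduced corona atlas is a welcome clarification, and the law of cosines turns out to be unnecessary since each new edge is congruent to an old edge (e.g.\ $j'=k'=m'=l'$ because $a=e=g$), so the edge relations follow from Proposition~\ref{cedges} alone.
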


\begin{proof}

By construction, as shown in Figure~\ref{iter} the tiles in $\mathcal T'$ obey 
      \begin{enumerate} \item $a'=c'=e'=f'=g'=n'$\item  $b'=h'=i'=o'$\item  $k'=m'$. 
      \end{enumerate}

Since in $\mathcal T$, $a=e=g$, we have $j'=k'=m'=l'$. Also, it can be easily shown that only an $A$ tile would fit between the $C$ and $B$ tiles on the far right in coronaa 5 in Figure~\ref{135}.  This yields the edge length equality $d'=p'$.
Therefore, we get the the fourth relation in Proposition \ref{cedges} required of the edges in an Ammann protoset. 
So, the algorithm preserves the edge congruence relations.

By examining Figure~\ref{iter}, it is clear that the angles of $\mathcal T$ are not congruent to the angles of $\mathcal T'$. However, we aim to show that the angle restrictions imposed by our algorithm for constructing $\mathcal T'$ are the same as the restrictions present in $\mathcal T$.

\begin{figure}\center{\includegraphics [scale=.5]{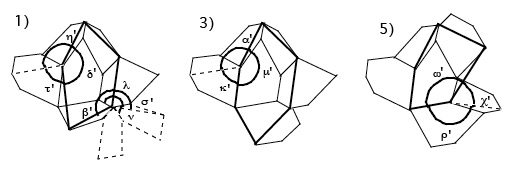}\caption{}\label{135}}
\end{figure}

From Figure \ref{135} we get immediately:
\begin{enumerate}
\item $ \nu'= \varepsilon =2\theta$,
\item $\varepsilon' = \gamma'=\nu=2\theta$,
\item $\iota' =\lambda=4\theta$,
\item $\lambda'=\varepsilon+\gamma=4\theta$,
\item $\mu'= \rho' $, and
\item $\alpha'=\eta' $
\end{enumerate}

The circle in the upper left of corona 1 in Figure~\ref{135} shows that $\delta' +\eta' +\tau' =10\theta$.
The circle on the upper left of corona 3 shows that  $\alpha'+\kappa'+\mu'=10\theta$.
The circle in the lower right of corona 5 shows that $\chi'+\rho' +\omega' =10\theta$.
Now we turn our attention to the lower right of corona 1.  The larger of the two arcs marks angle $\beta'$ and the smaller marks angle $\sigma'$. Recall from the labeling in Figure~\ref{abcangles} that $\lambda=4\theta$ and $\nu=2\theta$. 
From these we get $\beta' +\sigma'=\lambda+\nu=6\theta$.

This gives us angle restrictions
\begin{enumerate}
\item $\varepsilon=\gamma' =\nu'=2\theta$ \item
$\iota'=\lambda'=4\theta $\item
$\beta'+\sigma'=6\theta$ \item
$\gamma'+\rho'+\omega'=10\theta$ \item
$\delta'+\tau'+\eta'=10\theta $\item
$\alpha'+\kappa'+\mu'=10\theta $\item
$\alpha'=\eta' $\item
$\mu' =\rho'. $
\end{enumerate}
These are exactly the  same angle relations of Proposition \ref{cangles} required of an Ammann tiling. Therefore, 
$\mathcal T'$ is an Ammann tiling in the sense of Definition \ref{Adef}.
\end{proof}

Recall that Penrose composition done twice on a Penrose tiling by rhombs produces another Penrose tiling by rhombs whose prototiles are similar to the originals but scaled by a factor of $\phi$. We define the notation 
\begin{equation*}
\xymatrix {
& \mathcal P\ar[r]^{\text{Comp.$^2$}} &\mathcal P'}
\end{equation*}
 for Penrose tilings $\mathcal P$ and $\mathcal P'$ by rhombs to refer to performing Penrose compostion twice. Analogously, we define the notation 
 \begin{equation*}
\xymatrix {
& \mathcal T\ar[r]^{\text{Iter.}} &\mathcal T'}
\end{equation*}
for Ammann tilings $\mathcal T$ and $\mathcal T'$ to refer to Ammann iteration as defined in Algorithm \ref{ait}.

\begin{figure}\center{\includegraphics[scale=.5]{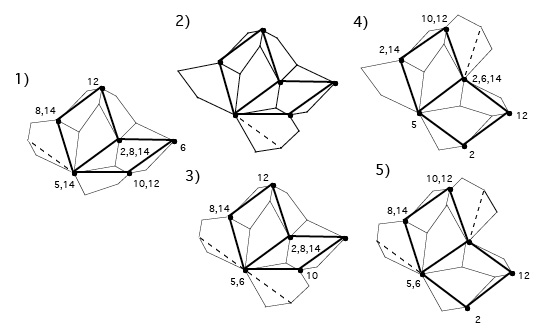}}\caption{The Ammann coronas with associated Penrose rhombs.}\label{corswithpen}
\end{figure}
\begin{figure}\center{\includegraphics[scale=.5]{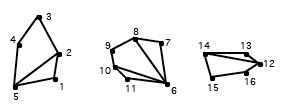}}\caption{Divisions of Ammann tiles by Penrose rhombs of corresponding Penrose tiling $\mathcal P$.}\label{cutatiles}\end{figure}

\begin{theorem} \label{R=P'}
Given an Ammann tiling $\mathcal T$, let $\mathcal P$ be its underlying Penrose tiling. Let $\mathcal T'$ be the iterated Ammann tiling, let $\mathcal P'$ be the tiling produced by applying Penrose composition to $\mathcal P$ twice, and let $\mathcal R$ be the underlying Penrose tiling of $\mathcal T'$. Then $\mathcal P'=\mathcal R$.
\begin{equation*}
\xymatrix {\text{Penrose Tilings:}
& \mathcal P\ar[r]^{\text{Comp.$^2$}} &\mathcal P'\ar@(r,r)[dd]\\
\text{Ammann Tilings:}&\mathcal T\ar[u]\ar[r]^{\text{Iter.}} &\mathcal T'\ar[d]\\
\text{Penrose Tilings:}& ~&\mathcal R\ar@(r,r)[uu]
}
\end{equation*}
\end{theorem}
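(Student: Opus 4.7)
The plan is to verify $\mathcal{P}' = \mathcal{R}$ by exhibiting $\mathcal{T}'$ as the Ammann tiling obtained from $\mathcal{P}'$ via the recomposition Algorithm \ref{recompa}. Since by Theorem \ref{abcaperiodic} every Ammann tiling arises by recomposition from a unique Penrose tiling, once $\mathcal{T}'$ has been identified as such a recomposition of $\mathcal{P}'$, uniqueness of the underlying Penrose tiling will force $\mathcal{R} = \mathcal{P}'$.

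First I would recall that each $A$ tile of $\mathcal{T}$ corresponds to a thick rhomb of $\mathcal{P}$, so the five $A$-coronas used in Algorithm \ref{ait} are naturally covered by specific patches of rhombs of $\mathcal{P}$ (Figure \ref{corswithpen}), and the new prototiles $A'$, $B'$, $C'$ of $\mathcal{T}'$ inherit a partition into rhombs of $\mathcal{P}$ (Figure \ref{cutatiles}). Independently, I would apply Penrose composition (Definition \ref{pdef}) twice to $\mathcal{P}$ --- passing through an intermediate kite-and-dart tiling --- to produce $\mathcal{P}'$, and describe each thick and thin rhomb of $\mathcal{P}'$ as a specific patch of finer rhombs of $\mathcal{P}$, with edges scaled by $\phi^2$. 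I would then apply the recomposition algorithm formally to $\mathcal{P}'$, introducing the new vertices $Q'$, $R'$, $S'$ inside the rhombs of $\mathcal{P}'$ at the positions dictated by the geometry of $\mathcal{T}'$.

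The core matching step, carried out corona-by-corona, is to verify that (i) the rhomb partition of each $A'$, $B'$, $C'$ shown in Figure \ref{cutatiles} agrees tile-for-tile with the rhomb decomposition of the doubly composed thick and thin rhombs of $\mathcal{P}'$, and (ii) the new vertices produced by Algorithm \ref{ait} land precisely where recomposition of $\mathcal{P}'$ would place them. Using Propositions \ref{cangles} and \ref{cedges} together with the $\phi^2$ scaling, both of these are finite geometric checks. The main obstacle will be the bookkeeping in step (ii): Algorithm \ref{ait} reverses the orientation of the angle and edge labels, so the point playing the role of $Q$ inside each thin rhomb of $\mathcal{P}'$ must be pinned down precisely and shown to lie in the appropriate half of that rhomb, and similarly for $R'$ and $S'$ inside the doubly composed thick rhombs. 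Once the identification has been established for the first one or two coronas, the remaining cases should follow by similar and shorter arguments, and applying Theorem \ref{abcaperiodic} then yields $\mathcal{R} = \mathcal{P}'$.
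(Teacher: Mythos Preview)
Your plan is essentially the paper's own approach: both arguments reduce the claim to a finite corona-by-corona check that $\mathcal{T}'$ and $\mathcal{P}'$ fit together in exactly the way dictated by the recomposition construction, and then invoke Theorem~\ref{abcaperiodic} to conclude $\mathcal{R}=\mathcal{P}'$.

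Two small points where the paper streamlines what you propose. First, rather than fully verifying that $\mathcal{T}'$ is the recomposition of $\mathcal{P}'$ --- which, as you anticipate, requires pinning down the new points $Q'$, $R'$, $S'$ and tracking the orientation reversal --- the paper observes that the edges of the underlying Penrose tiling always cut each Ammann prototile in a fixed canonical pattern (Figure~\ref{cutatiles}). It is then enough to check that the edges of $\mathcal{P}'$ cut the tiles of $\mathcal{T}'$ in that same pattern; this avoids your step~(ii) entirely. Second, because $\mathcal{P}'$ is obtained by \emph{double} composition, the first coronas of the $A$ tiles are not large enough to see the rhombs of $\mathcal{P}'$: the paper carries out the check over all possible \emph{second} coronas of the $A$ tiles (Figures~\ref{T,P'}--\ref{T,P'3}), which your outline should be amended to reflect. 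Also, the double composition scales the rhombs by $\phi$, not $\phi^2$.
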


\begin{proof}

First, note that $\mathcal R$ is well defined by Theorem \ref{abcaperiodic}. (Although Ammann tilings were not specifically defined until after the proof of Theorem \ref{abcaperiodic}, the proof used only the edge and angle relations that were later used to define Ammann tilings, allowing us to use its result here.)

Next, consider the relationship between $\mathcal T$ and $\mathcal P$. The edges of $\mathcal P$ cut the tiles of $\mathcal T$ in exactly the same way for each type of Ammann tile, as shown in Figure~\ref{cutatiles} (each type $A$ tile of $\mathcal T$ is cut once by segment $\overline{2,5}$, $B$ tiles are cut twice by segments $\overline{8,6}$ and $\overline{10,6}$, and $C$ tiles are cut once by segment $\overline{12,14}$). Since $\mathcal T'$ is an Ammann tiling with corresponding Penrose tiling $\mathcal R$, $\mathcal R$ will cut the tiles of $\mathcal T'$ across the corresponding vertices.  To show that $\mathcal P'=\mathcal R$, it is sufficient to show that $\mathcal P'$ makes exactly the same divisions in the tiles of $\mathcal T'$ as $\mathcal R$.

\begin{figure}\center{\includegraphics[scale=.5]{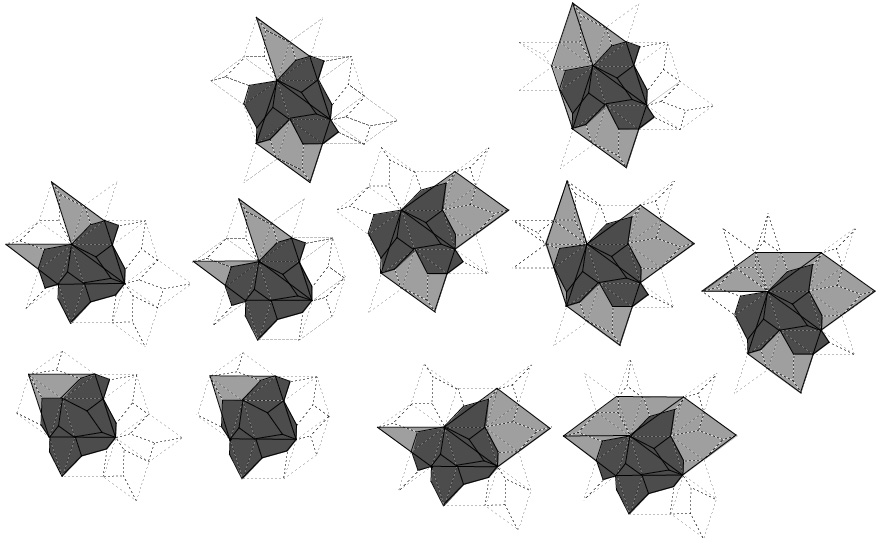}}\caption{The Ammann coronas with Penrose rhombs of $\mathcal P'$. }\label{T,P'}
\end{figure}

\begin{figure}\center{\includegraphics[scale=.5]{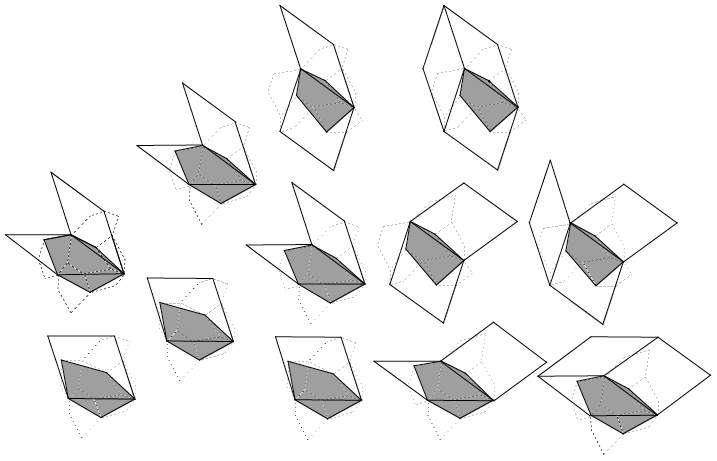}}\caption{The same coronas as Figure~\ref{T,P'} but with the tiles of $\mathcal T'$ drawn in. }\label{T,P'2}
\end{figure}

\begin{figure}\center{\includegraphics[scale=.5]{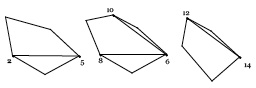}}\caption{The tiles of $\mathcal T'$ with the cuts made by $\mathcal P'$. }\label{T,P'3}
\end{figure}

Figure~\ref{T,P'} shows the five Ammann coronas of $\mathcal T$ in the context of all possible second coronas of the Ammann $A$ tiles. The Penrose rhombs of $\mathcal P'$ are superimposed. Iterating, we see in Figures~\ref{T,P'2} and \ref{T,P'3} that the tiles of $\mathcal T'$ are cut in same way by $\mathcal P'$ as they are by $\mathcal R$. Therefore, $\mathcal P'=\mathcal R$.
\end{proof}

Since $\mathcal T'$ is an Ammann tiling, the iteration algorithm may be performed on $\mathcal T'$ etc., yielding an infinite sequence of Ammann tilings corresponding to the infinite sequence of Penrose tilings created by the Penrose double composition process.

\section{Dynamics of the Ammann Iteration Process}

The Ammann iteration process does not preserve the exact shapes of the prototiles, making it difficult to  compare Ammann tilings obtained through repeated iteration.  On the other hand, Penrose tiles under double composition are easily described. Ammann iteration thus may be tracked by simultaneously tracking the change in the underlying Penrose tiling and the change in the relative location of $Q$ within the Penrose thin rhombs. Recall that the Penrose double composition process scales the prototiles by a factor of $\phi$, so by rescaling both the Penrose and Ammann tilings by $\frac{1}{\phi}$ after each Ammann iteration (equivalently Penrose double composition), the dimensions of the tiles of the underlying Penrose tiling remain constant throughout, allowing us to quantitatively compare the location of $Q$ within the thin rhomb at each stage of repeated iteration. 
We may thus study the dynamics of the iteration process on Ammann tilings by appealing to the local isomorphism theorem and tracking the movement of $Q$ in an Ammann tiling with respect to the (changing) underlying Penrose tiling.

Since each Ammann iteration reverses the direction of the labeling of the Ammann prototiles, the location of $Q_n$ will alternate between the lower right and lower left quadrants of the reference thin Penrose rhomb. In order to simplify our analysis, we let the sequence $\{Q_0, Q_1, Q_2, \dots, Q_n,\dots\}$ represent the movement of $Q$ under repeated iteration, but with the odd elements of the sequence reflectied through the principal (long) axis of the reference thin rhomb. We locate $Q$ within a  Penrose thin rhomb using polar  coordinates based along one edge of the rhomb.  Accordingly, we locate $Q_n$ by the parameters $r_n, \theta_n$.

\begin{figure}\center{\includegraphics[scale=.5]{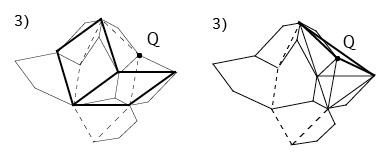}\caption{}\label{findQ2}}\end{figure}

Since we will first examine what happens to $Q$ under one application of the iteration process, we denote $Q':=Q_1$. We locate $Q'$ in corona 3 illustrated in Figure~\ref{findQ2} and notice that in this case, the point $Q$ is identical to the point $Q'$, but the orientation of the reference triangle has changed.

\begin{figure}\center{\includegraphics[scale=.5]{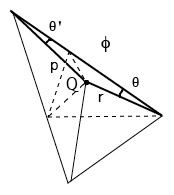}\caption{The enlarged triangle from corona 3), Figure~\ref{findQ2}. Notice that $Q$ is located within the dashed triangle by $(r,\theta)$ and within the solid triangle by $(p,\theta')$.}\label{findQ}}\end{figure}

Using the law of cosines on the triangle in Figure~\ref{findQ}, we get: 
$$p^2=r^2 +\phi^2-2r\phi\cos{\theta}.$$
 Normalizing, so that $\frac{p}{\phi}=r'$, we arrive at the formula
 \begin{equation}
 r'=\frac{\sqrt{r^2+\phi^2-2r\phi\cos\theta}}{\phi}.
 \label{r'}
 \end{equation}
 Again, we using the law of cosines on the triangle in Figure~\ref{findQ},

 \begin{equation}
 \begin{aligned}
 r^2&=\phi^2+p^2-2p\phi\cos{\theta'}
 \\& =\phi^2+(r')^2\phi^2-2\phi^2r'\cos{\theta'}.
\end{aligned}
\nonumber
  \end{equation}
  
 Rearranging we have
$$
\cos{ \theta'}={\left(\frac{\phi^2+(r')^2\phi^2-r^2}{2\phi^2r'}\right)},
$$
 and substituting for $r'$ using \ref{r'}, we have 
 
$$
\cos{\theta'}= \frac{\phi^2+(\frac{\sqrt{r^2+\phi^2-2r\phi\cos\theta}}{\phi})^2\phi^2-r^2}{2\phi^2\frac{\sqrt{r^2+\phi^2-2r\phi\cos\theta}}{\phi}},
$$
which simplifies to
\begin{equation}
\cos \theta'=\frac{\phi-r\cos\theta}{\sqrt{r^2+\phi^2-2r\phi\cos\theta}}. 
\end{equation}

Thus, the movement of $Q$ is described by the assignment $(r,\theta)\mapsto(r',\theta')$ where
\begin{equation}
(r',\theta')=\left(\frac{\sqrt{r^2+\phi^2-2r\phi\cos\theta}}{\phi}, \quad\arccos{\left(  \frac{\phi-r\cos\theta}{\sqrt{r^2+\phi^2-2r\phi\cos\theta}}   \right)} \right)\label{newQ}.
\end{equation}

\begin{theorem}
The map $(r,\theta)\mapsto(r',\theta')$ in \ref{newQ}  has a unique attractive fixed point at $(r,\theta)=(\frac{1}{\phi},0)$.\label{limit}
\end{theorem}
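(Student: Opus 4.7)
The plan is to verify that $(1/\phi, 0)$ is a fixed point, show it is the only one, and then linearize the map to establish attractiveness. For the verification, substitute $(r,\theta) = (1/\phi, 0)$ into \ref{newQ}: using the identities $\phi^2 = \phi + 1$ and $1/\phi^2 = 2 - \phi$, the radicand becomes $1/\phi^2 + \phi^2 - 2 = 1$, so $r' = 1/\phi$ and $\cos\theta' = \phi - 1/\phi = 1$, hence $\theta' = 0$.

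For uniqueness, I would replace the $\arccos$ formula by its law-of-sines counterpart for the triangle in Figure~\ref{findQ} (sides $r$, $\phi$, $\phi r'$), namely $\sin\theta' = r\sin\theta/(\phi r')$. A fixed point $(r,\theta) = (r',\theta')$ then satisfies $\sin\theta = \sin\theta/\phi$, forcing $\sin\theta = 0$. The case $\theta = \pi$ is ruled out because \ref{newQ} gives $\cos\theta' = (\phi + r)/(\phi + r) = 1$, hence $\theta' = 0 \neq \pi$. The remaining case $\theta = 0$ reduces \ref{newQ} to $r' = (\phi - r)/\phi$, and $r = r'$ yields the unique solution $r = \phi/(\phi+1) = 1/\phi$.

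For attractiveness, compute the Jacobian $J$ of the map at $(1/\phi, 0)$. Since $r'$ depends on $\theta$ only through $\cos\theta$ it is even in $\theta$, and since $\sin\theta' = (r/\phi r')\sin\theta$ is the product of an even function of $\theta$ with $\sin\theta$, the angle $\theta'$ is odd in $\theta$ near the fixed point. Hence $\partial r'/\partial\theta$ and $\partial\theta'/\partial r$ both vanish at $\theta = 0$, so $J$ is diagonal. A short computation gives
\[
\frac{\partial r'}{\partial r}\bigg|_{(1/\phi,0)} = \frac{r - \phi\cos\theta}{\phi\sqrt{r^2 + \phi^2 - 2r\phi\cos\theta}}\bigg|_{(1/\phi,0)} = \frac{1/\phi - \phi}{\phi} = -\frac{1}{\phi},
\]
and implicit differentiation of $\sin\theta' = r\sin\theta/(\phi r')$ in $\theta$ at the fixed point (where $\sin\theta = 0$ and $\cos\theta' = 1$) yields $\partial\theta'/\partial\theta = r/(\phi r') = 1/\phi$. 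The eigenvalues $\pm 1/\phi$ of $J$ both have modulus less than $1$, so $(1/\phi, 0)$ is attractive by the standard linearization theorem. The main obstacle is this Jacobian step: differentiating the $\arccos$ form of $\theta'$ is singular at the fixed point (where $\cos\theta' = 1$), so one must pass to the law-of-sines reformulation to differentiate implicitly; once this is in place the parity argument kills the off-diagonal entries, and only two short evaluations remain.
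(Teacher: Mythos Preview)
Your proof is correct, but it takes a different route from the paper's. The paper converts to Cartesian coordinates $x = r\cos\theta$, $y = r\sin\theta$ and discovers that the map is globally affine: $x' = 1 - x/\phi$ and $y' = y/\phi$. After the shift $u = x - 1/\phi$, $v = y$ this becomes the linear contraction $(u,v)\mapsto(-u/\phi,\, v/\phi)$, from which uniqueness and attractiveness follow at once with no separate fixed-point verification, law-of-sines trick, or Jacobian computation. You instead stay in polar coordinates throughout: you verify the fixed point by substitution, obtain uniqueness via the law-of-sines relation $\sin\theta' = r\sin\theta/(\phi r')$, and establish attractiveness by linearizing at the fixed point, using a parity argument to kill the off-diagonal Jacobian entries. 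Both approaches arrive at the same eigenvalues $\pm 1/\phi$. The paper's coordinate change buys more, though: it reveals that the map is exactly affine, so convergence to the fixed point is global, which is what the Ammann-iteration context really needs. Your linearization argument yields only local attractiveness---sufficient for the theorem as stated, but weaker than what the paper actually proves.
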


\begin{proof}
Changing to cartesian coordinates,
$$
x'=r'\cos{\theta'}
=\left( \frac{\sqrt{x^2+y^2+\phi^2-2x\phi}}{\phi}\right)
{\left(  \frac{\phi-x}{\sqrt{x^2+y^2+\phi^2-2x\phi}}   \right)}
=1-\frac{x}{\phi}
$$

and 
$$\begin{aligned}
y'&= r'\sin{\theta'}
\\&=\frac{\sqrt{x^2+y^2+\phi^2-2x\phi}}{\phi}\sin\arccos {\left(  \frac{\phi-x}{\sqrt{x^2+y^2+\phi^2-2x\phi}}   \right)}. 
\end{aligned}$$
Using the identity $\sin\arccos {x}=\sqrt{1-x^2}$, this simplifies to

$$\begin{aligned}
y'&=\frac{\sqrt{x^2+y^2+\phi^2-2x\phi}}{\phi}\left(\sqrt{1-\frac{(\phi^2-x\phi)^2}{\phi^2(x^2+y^2+\phi^2-2x\phi)}}\right)
\\&=\frac{\sqrt{\phi^2(x^2+y^2+\phi^2-2x\phi)-(\phi^4-2x\phi^3+x^2\phi^2)}}{\phi^2}
\\&=\frac{\abs{y}}{\phi}.
\end{aligned}$$
Since we are only interested in positive values of $y$ since $0\leq \theta \leq \frac{\pi}{5}$, we simply write 
\begin{equation}
y'=\frac{y}{\phi}.
\end{equation}
One more change of variables transforms this affine map to a linear map. Let
$$ u=x-\frac{1}{\phi}\text{ and }v=y.$$
Then 
\begin{equation}
u'=-\frac{u}{\phi}\text{ and } v'=\frac{v}{\phi}.
\label{map}
\end{equation}
This is a linear map which fixes $(u,v)=(0,0)$ and no other point. Going back to the previous coordinate system we have that the map fixes $(x,y)=(\frac{1}{\phi},0)$, which is equivalent to the point $(r,\theta)=(\frac{1}{\phi},0)$. 
Furthermore, since $0<\frac{1}{\phi}<1$, the eigenvalues for (\ref{map}) have absolute value less than 1, so the fixed point is attractive.
\end{proof}

\begin{remark}
The result of Theorem \ref{R=P'} allows us to identify an Ammann tiling by its underlying Penrose tiling along with the location of the point $Q$ within the thin Penrose rhombs. While Theorem \ref{limit} proves that $Q$ approaches a limit as the iteration process is repeated, it does not prove that the entire tiling approaches a limit. Considering a sequence of 0s and 1s that identifies a Penrose tiling, Penrose composition is equivalent to performing the shift map on that sequence. This map is chaotic, and except in a limited number of special cases, the underlying Penrose tiling does not approach a limit.
\end{remark}

\section{Diffraction Properties and Possible Relations to Quasicrystals}
\begin{figure}\center{\includegraphics[scale=.5]{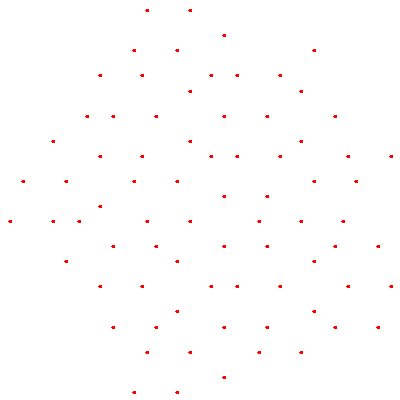}\includegraphics[scale=1]{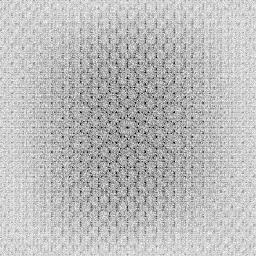}\caption{Left: the vertices of a patch of a Penrose tiling by rhombs. Right: its diffraction pattern.}\label{p}}
\end{figure}
\begin{figure}\center{\includegraphics[scale=.5]{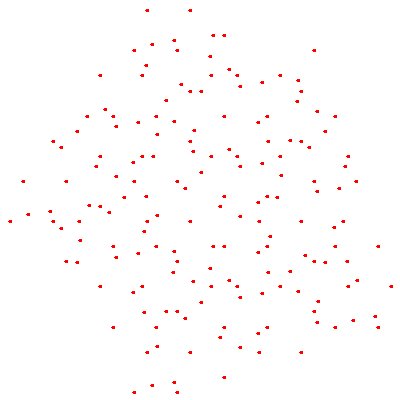}\includegraphics[scale=1]{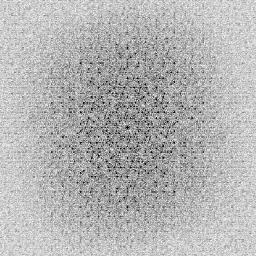}\caption{Left: the vertices of a patch of a generic Ammann tiling. Right: its diffraction pattern.}\label{ga}}
\end{figure}
\begin{figure}\center{\includegraphics[scale=.5]{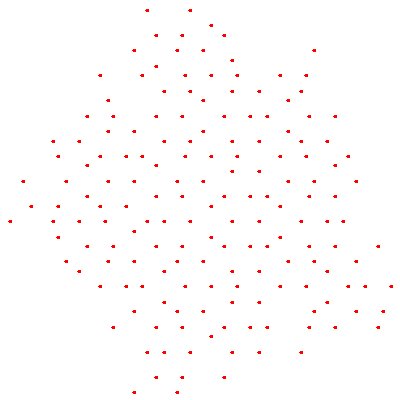}\includegraphics[scale=1]{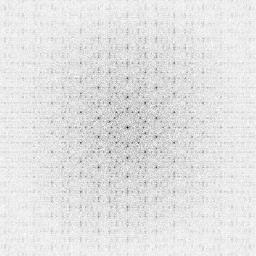}\caption{Left: the vertices of a patch of an Ammann limit tiling. Right: its diffraction pattern.}\label{a}}
\end{figure}

It is well known that a three-dimensional version of the Penrose tiling by rhombs is used to model quasicrystals with five-fold symmetry, so it is natural to ask whether Ammann tilings are similarly useful. 
The symmetry of a quasicrystal is identified by examining its Fraunhofer diffraction pattern [the far-range x-ray diffraction pattern]. This pattern can be simulated for tilings using the Fourier transform. We describe the set of vertices of the tiling [molecules of the quasicrystal] by the generalized function that has mass one at each point of the set and is zero everywhere else. The diffraction pattern of the vertex set of the tiling is then given by the intensity plot of the magnitude squared of the Fourier transform of this generalized function.

Figures \ref{p}, \ref{ga}, and \ref{a} each show the vertices of a tiling represented by small circles (left) and the inverted image of that set's diffraction pattern (right). Although the generic Ammann tiling (Figure \ref{ga}) does not show very clear symmetry, the Ammann limit tiling (Figure \ref{a}) shows very pronounced peaks, which are even brighter than those of the corresponding Penrose tiling (Figure  \ref{p}). This may indicate that the Ammann limit tiling also models a quasicrystal.


\end{document}